\pdfoutput=1
\RequirePackage{ifpdf}
\ifpdf 
\documentclass[pdftex]{sigma}
\else
\documentclass{sigma}
\fi

\newtheorem{Theorem}{Theorem}[section]

\newtheorem{Conjecture}[Theorem]{Conjecture}
{ \theoremstyle{definition}
\newtheorem{Definition}[Theorem]{Definition}

\newtheorem{Remark}[Theorem]{Remark}
\newtheorem{Remarks}[Theorem]{Remarks}}

\usepackage{amstext,verbatim,mathrsfs}
\usepackage[mathcal]{eucal}
\usepackage[all]{xy}

\makeatletter
\def\makeCal#1{\expandafter\newcommand\csname c#1\endcsname{\mathcal{#1}}}
\def\makeBB#1{\expandafter\newcommand\csname b#1\endcsname{\mathbb{#1}}}
\def\makeFrak#1{\expandafter\newcommand\csname f#1\endcsname{\mathfrak{#1}}}
\count@=0 \loop \advance\count@ 1 \edef\y{\@Alph\count@}
\expandafter\makeCal\y \expandafter\makeBB\y \expandafter\makeFrak\y
\ifnum\count@<26 \repeat

\newcommand{\ignore}[1]{}

\newcommand{\im}{\operatorname{im}}

\newcommand{\Stab}{\operatorname{Stab}}

\renewcommand{\Re}{\operatorname{Re}}

\newcommand{\End}{\operatorname{End}}

\newcommand {\<}{\langle}
\renewcommand {\>}{\rangle}
\newcommand{\half}{\frac{1}{2}}
\newcommand{\tensor}{\otimes}
\renewcommand{\O}{\mathscr{O}}
\newcommand{\isom}{\cong}
\newcommand{\into}{\hookrightarrow}

\newcommand{\hk}{hyperk{\"a}hler }

\newcommand{\lra}{\longrightarrow}
\newcommand{\lRa}[1]{\stackrel{#1}{\longrightarrow}}
\newcommand{\lLa}[1]{\stackrel{#1}{\longleftarrow}}
\newcommand{\Lie}{\cL}

\newcommand{\PGL}{\operatorname{PGL}}
\newcommand{\MCG}{\operatorname{MCG}}
\newcommand{\hash}{\#}
\renewcommand{\leq}{\leqslant}
\newcommand{\res}{\operatorname{res}}

\newcommand{\TB}{\scriptstyle{TB}}
\newcommand{\LR}{\scriptstyle{LR}}
\newcommand{\GL}{\operatorname{GL}}
\newcommand{\Quad}{\operatorname{Quad}}

\numberwithin{equation}{section}

\begin{document}

\allowdisplaybreaks

\newcommand{\arXivNumber}{2303.07061}

\renewcommand{\PaperNumber}{112}

\FirstPageHeading

\ShortArticleName{Tau Functions from Joyce Structures}

\ArticleName{Tau Functions from Joyce Structures}

\Author{Tom BRIDGELAND}

\AuthorNameForHeading{T.~Bridgeland}

\Address{Department of Pure Mathematics, University of Sheffield, Sheffield, S3\ 7RH, UK}
\Email{\href{mailto:t.bridgeland@sheffield.ac.uk}{t.bridgeland@sheffield.ac.uk}}

\ArticleDates{Received July 26, 2024, in final form December 12, 2024; Published online December 18, 2024}

\Abstract{We argued in [\textit{Proc. Sympos. Pure Math.}, Vol.~103, American Mathematical Society, Providence, RI, 2021, 1--66, arXiv:1912.06504] that, when a certain sub-exponential growth property holds, the Donaldson--Thomas invariants of a 3-Calabi--Yau triangulated category should give rise to a geometric structure on its space of stability conditions called a Joyce structure. In this paper, we show how to use a Joyce structure to define a generating function which we call the $\tau$-function. When applied to the derived category of the resolved conifold, this reproduces the non-perturbative topological string partition function of [\textit{J.~Differential Geom.} \textbf{115} (2020), 395--435, arXiv:1703.02776]. In the case of the derived category of the Ginzburg algebra of the A$_2$ quiver, we obtain the Painlev{\'e} I $\tau$-function.}

\Keywords{Donaldson--Thomas invariants; topological string theory; hyperk\"ahler geometry; twistor spaces; Painlev\'e equations}

\Classification{53C26; 53C28; 53D30; 34M55; 14N35}

\section{Introduction}

This paper is the continuation of a programme which attempts to encode the Donaldson--Thomas (DT) invariants of a CY$_3$ triangulated category $\cD$ in a geometric structure on the space of stability conditions $M=\Stab(\cD)$. The relevant geometry is a kind of non-linear Frobenius structure, and was christened a Joyce structure in \cite{RHDT2} in honour of the paper \cite{holgen} where the main ingredients were first discovered. In later work with Strachan \cite{Strachan}, it was shown that a~Joyce structure can be re-expressed in terms of a complex \hk structure on the total space of the tangent bundle $X=T_M$. An introduction to Joyce structures and their twistor spaces can be found in \cite{BJoy}.

The procedure for producing Joyce structures from DT invariants is conjectural, and requires solving a family of non-linear Riemann--Hilbert (RH) problems \cite{RHDT1} involving maps from the complex plane into a torus $(\bC^*)^n$ with prescribed jumps across a collection of rays. These problems are only defined if the DT invariants of the category satisfy a sub-exponential growth condition. So far there are no general results on existence or uniqueness of solutions. Nonetheless, there are several situations where it is possible to find natural solutions to the RH problems and explicitly describe the resulting Joyce structures.

It was discovered in \cite{con} that when $\cD$ is the 
derived category of coherent sheaves on the resolved conifold, the solutions to the associated RH problems can be repackaged in terms of a~single function, which can moreover be viewed as a non-perturbative A-model topological string partition function, since its asymptotic expansion coincides with the generating function for the Gromov--Witten invariants. This function was introduced in a rather ad hoc way however, and it was unclear how to extend its definition to more general settings.

 The aim of this paper is to formulate a general definition of such generating functions and study their properties. We associate to a Joyce structure on a complex manifold $M$, equipped with choices of certain additional data, a locally-defined function $\tau\colon X=T_M\to \bC^*$ which we call the $\tau$-function.
In the case of the derived category of the resolved conifold, and for appropriate choices of the additional data, the restriction of this $\tau$-function to a natural section $M\subset T_M$ coincides with the non-perturbative partition function obtained in \cite{con}.

The RH problems associated to the DT theory of the resolved conifold are rather special, in that the jumps across any two rays commute. A more representative class of examples can be obtained from the DT theory of CY$_3$ triangulated categories of class $S[A_1]$. These categories~${\cD=\cD(g,m)}$ are indexed by a genus $g\geq 0$ and a collection of pole orders~${m=\{m_1,\dots,m_l\}}$. When $l>0$, they can be defined using quivers with potential associated to triangulations of marked bordered surfaces \cite{L}, or via Fukaya categories of certain non-compact Calabi--Yau threefolds \cite{Smith}. The relevant threefolds $Y(g,m)$ are fibred over a~Riemann surface $C$, and are described locally by an equation of the form $y^2+uv=Q(x)$.

It was shown in \cite{BS} that when $l>0$ the space of stability conditions on the category $\cD(g,m)$, quotiented by the group of auto-equivalences,
 is the moduli space of pairs $(C,Q)$ consisting of a Riemann surface $C$ of genus $g$, equipped with a quadratic differential $Q$ with poles of order~${\{m_1,\dots,m_l\}}$ and simple zeroes. The associated DT invariants were shown to be counts of finite-length horizontal trajectories, as predicted by earlier work in physics \cite{GMN2,KLMVW}. These results have been extended by Haiden \cite{H} to the case $l=0$ involving holomorphic quadratic differentials.
 The general story described above then leads one to look for a natural Joyce structure on this space. In the case of differentials without poles this was constructed in \cite{CH}, and the generalisation to meromorphic differentials will appear in the forthcoming work \cite{Z}. The key ingredient in these constructions is the existence of isomonodromic families of bundles with connections.\looseness=1

In several examples, a non-perturbative completion of the $B$-model topological string partition function of the threefold $Y(g,m)$ is known to be related, via the Nekrasov partition function of the associated four-dimensional supersymmetric gauge theories of class $S[A_1]$, to an isomonodromic $\tau$-function \cite{BGT1,BGT2,BLMST}. It therefore becomes natural to try to relate the $\tau$-function associated to a Joyce structure of class $S[A_1]$ to an isomonodromic $\tau$-function. That something along these lines should be true was suggested by the work of Teschner and collaborators \cite{T1,T2}. The definition of the Joyce structure $\tau$-function was then reverse-engineered using the work of Bertola and Korotkin \cite{BK1} on the moduli-dependence of isomonodromic $\tau$-functions.

We shall treat one example of class $S[A_1]$ in detail below. It corresponds to taking $g=0$ and $m=\{7\}$. The resulting category $\cD(g,m)$ is the derived category of the CY$_3$ Ginzburg algebra associated to the A$_2$ quiver. The corresponding Joyce structure was constructed in~\cite{A2}. We show that with appropriate choices of the additional data the resulting Joyce structure $\tau$-function coincides with the Painlev{\'e} I $\tau$-function studied by Lisovyy and Roussillon~\cite{LR}.

 {\bf Plan of the paper.} We begin in Section \ref{general} by reviewing material from \cite{BJoy,Strachan}. We introduce the notion of a pre-Joyce structure on a complex manifold $M$ and the corresponding complex \hk structure on the total space $X=T_M$ of the tangent bundle. A Joyce structure is defined as a pre-Joyce structure with certain extra symmetries which are controlled by an additional structure on $M$ called a period structure.

In Section \ref{twistor}, we define the twistor space $p\colon Z\to \bP^1$ associated to a Joyce structure and recall some of its basic properties. We also give a brief preview of the definition of the $\tau$-function which is revisited in detail in Section \ref{tausection}.

In Section \ref{geometric}, we give a conjectural description of a class of Joyce structures relating to theories of class $S[A_1]$. The base $M$ of these structures parameterises Riemann surfaces of genus~$g$ equipped with a meromorphic quadratic differential having poles of fixed orders $\{m_1,\dots, m_l\}$. In the case $l=0$ of holomorphic differentials, these are constructed rigorously by a different method in \cite{CH}. The general case will be treated in \cite{Z}.

In Section \ref{twist}, we consider certain additional structures on the twistor space of a Joyce structure which are present in many examples, and which are relevant for the definition of the $\tau$-function. These are: (i) the structure of a cotangent bundle on the fibre $Z_0$, (ii) the existence of collections of preferred Darboux co-ordinates on the fibre $Z_1$, and (iii) a preferred choice of a Lagrangian submanifold in $Z_\infty$. We recall from \cite{BJoy} that the combination of (i) and (iii) gives rise to an integrable Hamiltonian system.

The definition of the $\tau$-function associated to a Joyce structure appears in Section \ref{tausection}.
 It depends on the choice of certain additional data, namely symplectic potentials on the twistor fibres $Z_0$, $Z_1$ and $Z_\infty$. We explain how these choices relate to the additional properties of the twistor space discussed in the previous section. We then show that when restricted to various loci the $\tau$-function produces generating functions for certain naturally associated symplectic maps. We also explain the relation with $\tau$-functions in the usual sense of Hamiltonian systems.

In Section \ref{uncoupled}, we consider the Joyce structure $\tau$-functions associated to uncoupled BPS structures. When restricted to a section of the projection $\pi\colon X\to M$ we show that our definition reproduces the $\tau$-functions defined in \cite{RHDT1}. In particular, this applies to the non-perturbative partition function of the resolved conifold computed in \cite{con}.

In Section \ref{a2},
 we consider the Joyce structure arising from the DT theory of the A$_2$ quiver. This was constructed in \cite{A2} using the monodromy map for the deformed cubic oscillator. We show that the Joyce structure $\tau$-function coincides with the Painlev{\'e} I $\tau$-function extended as a~function of monodromy exactly as described by Lisovyy and Roussillon \cite{LR}.

{\bf Conventions.} We work throughout in the category of complex manifolds and holomorphic maps. All symplectic forms, metrics, bundles, connections, sections etc.\ are holomorphic. The tangent bundle of a complex manifold $M$ is denoted $T_M$, and the derivative of a map of complex manifolds $f\colon M\to N$ is denoted $f_*\colon T_M\to f^*(T_N)$. The map $f$ is called {\'e}tale if $f_*$ is an isomorphism. We use the symbol $\cL$ to denote the Lie derivative.

\section{Joyce structures}
\label{general}

In this section, we introduce the geometric structures that will appear throughout the rest of the paper. They can be described either in terms of flat pencils of symplectic non-linear connections~\cite{RHDT2}, or via complex \hk structures as in \cite{Strachan}. Most of this material is standard in the twistor-theory literature, see, for example, \cite{CMN,DM}, and goes back to the work of Pleba{\'n}ski \cite{P}. We base our treatment on \cite{BJoy} to which we refer for further details.

\subsection{Pre-Joyce structures}
\label{firstone}\label{coords}

Let $\pi\colon X\to M$ be a holomorphic submersion of complex manifolds. There is a short exact sequence of vector bundles
\[
0\lra V(\pi)\lRa{i} T_X\lRa{\pi_*} \pi^*(T_M)\lra 0,
\]
where $V(\pi)=\ker(\pi_*)$ is the sub-bundle of vertical tangent vectors. Recall that a \emph{non-linear $($or Ehresmann$)$
connection} on $\pi$ is a splitting of this sequence, given by a map of bundles $h\colon \pi^*(T_M)\to T_X$ satisfying $\pi_*\circ h=1$.

Writing $H=\im(h)$ and $V=V(\pi)$, the tangent bundle of $X$ decomposes as a direct sum~${T_X=H\oplus V}$. We call tangent vectors and vector fields \emph{horizontal} or \emph{vertical} if they lie in~$H$ or $V$ respectively. A vector field $u\in H^0(M,T_M)$ can be lifted to a horizontal vector field~${h(u)\in H^0(X,T_X)}$ by composing the pullback $\pi^*(u)\in H^0(X,\pi^*(T_M))$ with the map $h$.

The connection $h$ is \emph{flat} if the following equivalent conditions hold:
\begin{itemize}\itemsep=0pt
\item[(i)] for every $x\in X$ there are local co-ordinates $(x_1,\dots, x_n)$ on $X$ at $x$, and $(y_1,\dots, y_d)$ on $M$ at $\pi(x)$, such that $x_i=\pi^*(y_i)$ and $h\bigl(\frac{\partial}{\partial y_i}\bigr)=\frac{\partial}{\partial x_i}$ for $1\leq i\leq d$,
\item[(ii)] the sub-bundle $H=\im(h)\subset T_X$ is involutive: $[H,H]\subset H$.
\end{itemize}

Consider the special case in which $\pi\colon X=T_M\to M$ is the total space of the tangent bundle of $M$. There is then a canonical isomorphism $\nu\colon \pi^*(T_M)\to V(\pi)$ identifying the vertical tangent vectors in the bundle with the bundle itself, and we set $v=i\circ\nu$
\begin{equation*}
\xymatrix@C=1em{
 0\ar[rr] && V(\pi) \ar[rr]^{i} &&T_X \ar[rr]^{\pi_*} &&\pi^*(T_M) \ar@/_1.8pc/[ll]_{h_\epsilon} \ar@/^1.8pc/[llll]^\nu \ar[rr] && 0. } \end{equation*}

Suppose that $M$ is equipped with a holomorphic symplectic form $\omega\in H^0\bigl(M,\wedge^2 T^*_M\bigr)$. Via the isomorphism $\nu$ we obtain a relative symplectic form $\Omega_\pi\in H^0\bigl(X,\wedge^2 T_{X/M}^*\bigr)$ which restricts to a~translation-invariant symplectic form $\omega_m$ on each fibre $X_m=T_{M,m}$.
We say that the connection~$h$ on $\pi$ is \emph{symplectic} if for any path $\gamma\colon [0,1]\to M$ the partially-defined parallel transport maps~${\operatorname{PT}_{\gamma}(t)\colon X_{\gamma(0)}\to X_{\gamma(t)}}$ take $\Omega_{\gamma(0)}$ to $\Omega_{\gamma(t)}$.

\begin{Definition}
A \emph{pre-Joyce structure} $(\omega,h)$ on a complex manifold $M$ consists of
\begin{itemize}\itemsep=0pt
\item[(i)] a symplectic form $\omega$ on $M$,
\item[(ii)] a {non-linear connection} $h$ on the tangent bundle $\pi\colon X=T_M\to M$,
\end{itemize}
such that for each $\epsilon\in \bC^*$ the connection $h_\epsilon=h+\epsilon^{-1}v$ is flat and symplectic.\end{Definition}

 Given local co-ordinates $(z_1,\dots,z_n)$ on $M$ there are associated linear co-ordinates $(\theta_1,\dots,\theta_n)$ on the tangent spaces $T_{M,p}$ obtained by writing a tangent vector in the form $\sum_i \theta_i\cdot {\partial}/{\partial z_i}$. We thus get induced local co-ordinates $(z_i,\theta_j)$ on the space $X=T_M$. We always assume that the co-ordinates $z_i$ are Darboux, in the sense that
\[
\omega = \frac{1}{2} \sum_{p,q} \omega_{pq} \cdot {\rm d}z_p\wedge {\rm d}z_q,
\]
with $(\omega_{pq})_{p,q=1}^n$ a constant skew-symmetric matrix. We denote by $(\eta_{pq})_{p,q=1}^n$ the inverse matrix.

We can express them maps $v$ and $h$ in the form
\begin{equation}
\label{above}
v_i=v\left(\frac{\partial}{\partial z_i}\right)= \frac{\partial}{\partial \theta_i}, \qquad h_i=h\left(\frac{\partial}{\partial z_i}\right)= \frac{\partial}{\partial z_i} + \sum_{p,q} \eta_{pq} \cdot \frac{\partial W_i}{\partial \theta_p} \cdot \frac{\partial}{\partial \theta_q},
\end{equation}
for locally-defined functions $W_i$ on $X$.
The connection $h_\epsilon$ is flat precisely if $\big[h_i+\epsilon^{-1} v_i,\allowbreak {h_j+\epsilon^{-1} v_j}\big]=0$ for all $i$, $j$. A short calculation shows that this holds for all $\epsilon\in \bC^*$ precisely if
\begin{gather}
 \frac{\partial}{\partial \theta_k}\left(\frac{\partial W_i}{\partial \theta_j}-\frac{\partial W_j}{\partial \theta_i}\right)=0, \qquad
 \frac{\partial}{\partial \theta_k}\left(\frac{\partial W_i}{\partial z_j}-\frac{\partial W_j}{\partial z_i }-\sum_{p,q} \eta_{pq} \cdot \frac{\partial W_i}{\partial \theta_p} \cdot \frac{\partial W_j}{\partial \theta_q}\right)=0.\label{jude}
 \end{gather}

\subsection{Joyce structures}
\label{further}

 A Joyce structure is a pre-Joyce structure with certain extra symmetries. These symmetries are controlled by an additional structure called a period structure.

 \begin{Definition}A \emph{period structure} on a complex manifold $M$ is given by a collection of distinguished charts $\phi_i\colon U_i\to \bC^n$ whose transition functions are maps of the form $z_i\mapsto \sum_j a_{ij} z_j$ with $(a_{ij})_{i,j=1}^n\in \GL_n(\bZ)$. The local co-ordinate systems $(z_1,\dots,z_n)$ associated to distinguished charts will be called \emph{integral linear co-ordinates}.
\end{Definition}

A Joyce structure on a complex manifold $M$ consists of a period structure and a pre-Joyce structure satisfying several compatibility relations.
For the precise definition, we refer the reader to \cite[Section 3]{BJoy}. Here we will just give a description in terms of local co-ordinates.
Henceforth, ${(z_1,\dots,z_n)}$ will always denote a local system of co-ordinates on $M$ which are integral linear for the period structure underlying our Joyce structure. We then consider the associated co-ordinates $(z_i,\theta_j)$ on $X=T_M$ as above.
There is a well-defined vector field on $M$
\[
Z=\sum_i z_i\cdot \frac{\partial}{\partial z_i}.
\]
We also consider the corresponding lifted vector field on $X$ given by
\begin{equation} \label{e}E=\sum_i z_i\cdot \frac{\partial}{\partial z_i}.\end{equation}
We refer to either of these vector fields as \emph{Euler vector fields}. We say that the Joyce structure is \emph{homogeneous} if they generate $\bC^*$-actions on $M$ and $X$ respectively. For simplicity, we will often assume that this is the case in what follows.

It was shown in \cite{BJoy} that in the case of a Joyce structure one can rewrite the vector fields~\eqref{above} in the form
\[
v_i=v\left(\frac{\partial}{\partial z_i}\right)= \frac{\partial}{\partial \theta_i}, \qquad h_i=h\left(\frac{\partial}{\partial z_i}\right)= \frac{\partial}{\partial z_i} + \sum_{p,q} \eta_{pq} \cdot \frac{\partial^2 W}{\partial \theta_i\partial \theta_p} \cdot \frac{\partial}{\partial \theta_q},
\]
where $W$ is a single locally-defined function on $X$ called the \emph{Pleba{\'n}ski function}. This can be normalised by requiring
\[
 W(z_1,\dots,z_n, 0, \dots, 0)=0=\frac{\partial W}{\partial \theta_i}(z_1,\dots, z_n,0,\dots,0).
 \]
The flatness conditions \eqref{jude} then become \emph{Pleba{\'n}ski's second heavenly equations}
 \[
\frac{\partial^2 W}{\partial \theta_i \partial z_j}-\frac{\partial^2 W}{\partial \theta_j \partial z_i }=\sum_{p,q} \eta_{pq} \cdot \frac{\partial^2 W}{\partial \theta_i \partial \theta_p} \cdot \frac{\partial^2 W}{\partial \theta_j \partial \theta_q}.
\]

The first axiom of a Joyce structure is that the inverse of the symplectic form satisfies $\eta_{pq}\in 2\pi {\rm i} \bZ$. The other axioms are equivalent to the following symmetry properties of the function $W$:
\begin{gather}
\label{gove2}
\frac{\partial^2 W}{ \partial \theta_i \partial \theta_j}(z_1,\dots,z_n,\theta_1+2\pi {\rm i} k_1,\dots,\theta_n+2\pi {\rm i}k_n)=\frac{\partial^2 W}{\partial \theta_i \partial \theta_j }(z_1,\dots,z_n,\theta_1,\dots, \theta_n), \\
 \label{reeves2}W(\lambda z_1,\dots,\lambda z_n,\theta_1,\dots, \theta_n)=\lambda^{-1} \cdot W(z_1,\dots,z_n,\theta_1,\dots, \theta_n), \\
W(z_1,\dots,z_n,-\theta_1,\dots, -\theta_n)=-W(z_1,\dots,z_n,\theta_1,\dots, \theta_n),\nonumber
\end{gather}
where $(k_1,\dots, k_n)\in \bZ^n$ and $\lambda\in \bC^*$.

\subsection[Complex hyperk\"ahler structure]{Complex \hk structure}

We recall the following definition from \cite{BJoy}.

\begin{Definition}
A \emph{complex \hk structure} $(g,I,J,K)$ on a complex manifold $X$ consists of a non-degenerate symmetric bilinear form
$g\colon T_X \tensor T_X \to \O_X,$
together with endomorphisms~${I,J,K\in \End(T_X)}$ satisfying the quaternion relations
\[
I^2=J^2=K^2=IJK=-1,
\]
which preserve the form $g$, and which are parallel with respect to the Levi-Civita connection.
\end{Definition}

A pre-Joyce structure on a complex manifold $M$ induces a complex \hk structure on the total space $X=T_M$. The action of the quaternions is given by
\begin{gather*}
I(h_i) ={\rm i}\cdot h_i,\qquad I(v_i) =-{\rm i}\cdot v_i,\qquad J(h_i) =-v_i,\qquad J(v_i) =h_i,\\
 K(h_i) ={\rm i} v_i, \qquad
   K(v_i) ={\rm i} h_i,
 \end{gather*}
and the metric $g$ is defined by
\[
g(h_i,h_j)=0, \qquad g(h_i,v_j)=\half \omega_{ij}, \qquad g(v_i,v_j)=0.
\]

There are associated closed 2-forms on $X$ given by
\[
\Omega_I(w_1,w_2)=g(I(w_1),w_2),\qquad \Omega_{\pm}(w_1,w_2)=g((J\pm {\rm i}K)(w_1),w_2).
\]
Explicitly, we have
\begin{gather}
\Omega_+=\half\cdot \sum_{p,q} \omega_{pq} \cdot {\rm d}z_p \wedge {\rm d}z_q, \qquad 2{\rm i} \Omega_I=-\sum_{p,q} \omega_{pq} \cdot {\rm d} z_p \wedge {\rm d} \theta_q, \nonumber\\
\Omega_-=\half\cdot \sum_{p,q}\omega_{pq} \cdot {\rm d}\theta_p\wedge {\rm d}\theta_q+\sum_{p,q} \frac{\partial^2 W}{\partial \theta_p \partial\theta_q} \cdot {\rm d} \theta_p \wedge {\rm d} z_q +\sum_{p,q} \frac{\partial^2 W}{\partial z_p \partial \theta_q} \cdot {\rm d}z_p\wedge {\rm d}z_q.\label{w}
\end{gather}

There are identities
\begin{equation}
\label{scales}
\Lie_E(\Omega_+)=2\Omega_+, \qquad \Lie_E(\Omega_I)=\Omega_I, \qquad \Lie_E(\Omega_-)=0,
\end{equation}
which follow immediately from \eqref{e} and the homogeneity property \eqref{reeves2}.

\begin{Remark}So as to be able to include certain interesting examples such as those discussed in Section \ref{geometric} below, it is often useful to weaken the axioms of a Joyce structure to allow the connections $h_\epsilon\colon \pi^*(T_M)\to T_X$ to have poles. When expressed in terms of local co-ordinates as above, this just means that the function $W(z,\theta)$ is meromorphic. We will refer to the resulting structures as \emph{meromorphic Joyce structures}.
 \end{Remark}

\section{Twistor space}
\label{twistor}

The geometry of a Joyce structure is often clearer when viewed through the lens of the associated twistor space \cite{BE,HLMR}. In this section, we define the twistor space and give some basic properties. We also give a preview of the definition of the $\tau$-function which is the main topic of this paper.

\subsection{Definition of twistor space}

Let $(\omega,h)$ be a pre-Joyce structure on a complex manifold $M$. There is then a pencil of flat, non-linear connections $h_\epsilon=h+\epsilon^{-1}v$ on the tangent bundle $\pi\colon X=T_M\to M$.
For a given point $\epsilon \in \bC^*\cup\{\infty\}$, we can consider the sub-bundle $H(\epsilon):=\im(h_\epsilon)\subset T_X$.
Since $h_\epsilon$ is flat this sub-bundle is involutive: $[H(\epsilon),H(\epsilon)]\subset H(\epsilon)$.
The twistor fibre $Z_\epsilon$ is defined to be the space of leaves of the associated foliation on $X$. We denote by $q_\epsilon\colon X\to Z_\epsilon$ the quotient map.

More globally, we consider the product $\bP^1\times X$ and the projection maps
$\bP^1\lLa{\pi_1} \bP^1\times X\lRa{\pi_2} X$.
As $\epsilon \in \bP^1$ varies, the sub-bundles $H(\epsilon)\subset T_M$ combine to give a sub-bundle $H\subset \pi_2^*(T_X)$. We can then view this as a sub-bundle of $T_{\bP^1\times X}$ via the canonical decomposition $T_{\bP^1\times X}=\pi_1^*(T_{\bP^1})\oplus \pi_2^*(T_X)$. This sub-bundle is involutive, and we define the \emph{twistor space} $Z$ to be the space of leaves of the associated foliation. We denote by $q\colon \bP^1\times X\to Z$ the quotient map.

There is an induced projection $p\colon Z\to \bP^1$ satisfying $p\circ q=\pi_1$. The fibre $p^{-1}(\epsilon)$ over a~point~${0\neq \epsilon\in \bP^1}$ coincides with the space $Z_\epsilon$ defined before. The fibre $Z_0=p^{-1}(0)$ is the space of leaves of the distribution $V=\ker(\pi_*)\subset T_X$ and is therefore identified with $M$.
The situation is summarised in the following diagram:
\begin{equation*}
\xymatrix@C=1.7em{ X \ar@{^{(}->}[rr] \ar[d]_{q_\epsilon} && \bP^1\times X \ar[d]^{q}\ar@/^2.5pc/[dd]^{\pi_1} \\
Z_\epsilon \ar@{^{(}->}[rr]\ar[d] && Z\ar[d]^{p} \\ \{\epsilon\} \ar@{^{(}->}[rr]&& \bP^1 }
\end{equation*}
in which the horizontal arrows are the obvious closed embeddings.

\begin{Remark}
Unfortunately, to obtain a well-behaved twistor space $p\colon Z\to \bP^1$ we cannot in general just take the space of leaves of the foliation on $\bP^1\times X$. Rather, we should consider the holonomy groupoid, which leads to the analytic analogue of a Deligne--Mumford stack \cite{Moer}. We will completely ignore these subtleties here, and essentially pretend that $Z$ is a complex manifold.
In fact, for what we do here, nothing useful would be gained by a more abstract point of view, because we are only really using the twistor space as a useful and suggestive shorthand. All statements we make about the space $Z$ can be easily translated into statements only involving objects on $X$. For example, a symplectic form on the twistor fibre $Z_\epsilon$ is nothing but a closed 2-form on $X$ whose kernel coincides with the sub-bundle $H(\epsilon)\subset T_X$. Similarly, an {\'e}tale map from the twistor fibre $Z_\epsilon$ to some complex manifold $Y$ is just a holomorphic map~${f\colon X\to Y}$ such that $\ker(f_*)=H(\epsilon)$.
\end{Remark}

Recall the complex \hk structure $(g,I,J,K)$ on $X$ and the associated closed 2-forms~$\Omega_\pm$ and $\Omega_I$. When $\epsilon \in \bC^*\cup\{\infty\}$ a simple calculation shows that
\[
H(\epsilon)=\im\big(h+\epsilon^{-1}v \big)=\ker\big(\epsilon^{-2} (J+{\rm i}K)+2{\rm i} \epsilon^{-1} I + (J-{\rm i}K)\big)\subset T_X.
\]
It follows that there is a symplectic form $\Omega_\epsilon$ on the twistor fibre $Z_\epsilon$ such that
\begin{equation}
\label{qe}q_\epsilon^*(\Omega_\epsilon)=\epsilon^{-2}\Omega_++2{\rm i}\epsilon^{-1} \Omega_I+\Omega_-.\end{equation}

More globally, the right-hand side of \eqref{qe} defines a twisted relative symplectic form on the twistor space $p\colon Z\to \bP^1$. This is a section of the bundle $ p^*(\O(2))\tensor \wedge^2  T_{Z/\bP^1}^*$ and restricts to give a 2-form $\Omega_\epsilon$ on each twistor fibre $Z_\epsilon$ which is well defined up to multiplication by a nonzero constant.
 When $\epsilon\neq 0$, we can fix this scale by imposing \eqref{qe}.
We specify the scale on the fibre~${Z_0=M}$ by taking $\Omega_0=\omega$. Thus
$
q_0^*(\Omega_0)=\Omega_+$, $ q_\infty^*(\Omega_\infty)=\Omega_-$.
In what follows, we will generally try to distinguish between objects living on a twistor fibre $Z_\epsilon$ and their pullbacks via the quotient map $q_\epsilon\colon X\to Z_\epsilon$. When writing formulae, however, this distinction tends to be a~distraction and we will sometimes drop it.

\subsection{Twistor space of a Joyce structure}
\label{ab}
Let us now consider a homogeneous Joyce structure on a complex manifold $M$, and the associated twistor space $p\colon Z\to \bP^1$. In particular, there is a $\bC^*$-action on $M$ generated by the vector field~$Z$, and a $\bC^*$-action on $X$ generated by the lifted vector field $E$. We then consider the diagonal $\bC^*$-action on $\bP^1\times X$, where the action on $\bP^1$ is the standard one rescaling $\epsilon$ with weight 1. It follows from the conditions \eqref{scales} that this action descends along the quotient map $q\colon \bP^1\times X\to Z$ to give a $\bC^*$-action on $Z$.

The map $p\colon Z\to \bP^1$ intertwines the $\bC^*$-actions, and it follows that there are essentially three distinct twistor fibres: $Z_0$, $Z_1$ and $Z_\infty$, of which $Z_0=M$ is the base of the Joyce structure. The $\bC^*$-action on $Z$ restricts to $\bC^*$-actions on the fibres $Z_0$ and $Z_\infty$. The relations \eqref{scales} show that the symplectic form $\Omega_0$ on $Z_0$ is homogeneous of weight 2, whereas $\Omega_\infty$ on $Z_\infty$ is $\bC^*$-invariant.

We introduce 1-forms on $X$ via the formulae
\begin{equation*} 
\alpha_+= i_E(\Omega_+), \qquad \alpha_I = i_E(\Omega_I), \qquad \alpha_-= i_E(\Omega_-).
\end{equation*}
The relations \eqref{scales} together with the Cartan formula imply that
\begin{equation}
\label{scales2}
{\rm d} \alpha_+=2\Omega_+, \qquad {\rm d}\alpha_I=\Omega_I, \qquad {\rm d}\alpha_-=0.
\end{equation}
The closed form $\alpha_-$ was discussed in detail in \cite[Section 5.1]{BJoy}. The forms $\alpha_+$ and $\alpha_I$ will play an important role below. In terms of local co-ordinates, we have
\begin{equation}\label{snowy}
\alpha_+= \sum_{p,q} \omega_{pq} \cdot z_p  {\rm d}z_q, \qquad 2{\rm i}\alpha_I= -\sum_{p,q} \omega_{pq} \cdot z_p  {\rm d} \theta_q.
\end{equation}

 The form $\alpha_+$ clearly descends to the twistor fibre $Z_0=M$.
Thus we can write $\alpha_+=q_0^*(\alpha_0)$, with $\alpha_0$ a form on $Z_0$, and we see that the symplectic form $\Omega_0$ is exact, with canonical symplectic potential $\half \alpha_0$. Moreover, this potential is homogeneous of weight 2 for the $\bC^*$-action on $Z_0$. The form $\alpha_I$ similarly provides a canonical symplectic potential for the symplectic form~$\Omega_I$ on~$X$, and is homogeneous of weight~1.

\subsection[Preview of the tau-function]{Preview of the $\boldsymbol{\tau}$-function}

 We now give a preview of the definition of the $\tau$-function associated to a Joyce structure. Consider the identity of closed 2-forms on $X=T_M$
 \begin{equation}
 \label{ide2}q_1^*(\Omega_1)=\Omega_++2{\rm i} \Omega_I + \Omega_-\end{equation}
obtained by setting $\epsilon=1$ in \eqref{qe}.
Let us choose locally-defined 1-forms $\Theta_I$ and $\Theta_\pm$ on $X$ satisfying
\[
{\rm d}\Theta_I=\Omega_I, \qquad {\rm d}\Theta_\pm=\Omega_\pm.
\]
It is natural to assume that the forms $\Theta_+$ and $\Theta_-$ descend to the twistor fibres $Z_0$ and $Z_\infty$, respectively, and to require the homogeneity properties
\[
\cL_E(\Theta_+)=2\Theta_+, \qquad \cL_E(\Theta_I)=\Theta_I, \qquad \cL_E(\Theta_-)=0.
\]

Let us also choose a locally-defined 1-form $\Theta_1$ on the twistor fibre $Z_1$ satisfying ${\rm d}\Theta_1=\Omega_1$.
Then the corresponding \emph{$\tau$-function} is the locally-defined function on $X$ uniquely specified up to multiplication by constants by the relation
\begin{equation}
\label{tau3}
{\rm d}\log(\tau)=\Theta_++2{\rm i}\Theta_I +\Theta_--q_1^*(\Theta_1).\end{equation}
Note that \eqref{ide2} implies that the right-hand side of \eqref{tau3} is closed.

The definition \eqref{tau3} is of course vacuous without some prescription for the symplectic potentials appearing. From the discussion of Section \ref{ab}, there are canonical choices $\Theta_+=\alpha_+$ and~${\Theta_I=\alpha_I}$, so what remains is to specify $\Theta_1$ and $\Theta_\infty$. However we will see below that even for~$\Theta_I$ and $\Theta_+$ it may be useful to make different choices. In this paper, we will only make partial progress towards resolving these issues. Before revisiting them in more detail in Section~\ref{tausection}, we will first introduce an important class of examples of Joyce structures, and use them to motivate some relevant geometric properties of the twistor space.

\section[Joyce structures of class \protect{S[A\_1]}]{Joyce structures of class $\boldsymbol{S[A_1]}$}
\label{geometric}
As discussed in the introduction, there is a class of meromorphic Joyce structures which are related to supersymmetric theories of class $S[A_1]$. The base $M$ parameterizes pairs consisting of a Riemann surface equipped with a quadratic differential having simple zeroes and poles of fixed orders. In the case of holomorphic quadratic differentials, the Joyce structure was constructed in \cite{CH}. The construction in the meromorphic case will appear in \cite{Z}. In this section, we give a sketch of a `quick and dirty' approach to these Joyce structures which will be enough to understand some of their general features, and which may be of independent interest.

\subsection{Quadratic differentials}

We begin by fixing the data of a genus $g\geq 0$ and a collection of integers $m=\{m_1,\dots,m_l\}$ with all $m_i\geq 2$.
There is then a space
$\Quad(g,m)$ parameterizing pairs $(C,Q_0)$, where $C$ is a~compact, connected Riemann surface of genus $g$, and $Q_0$ is a meromorphic section of $\omega_C^{\tensor 2}$ with simple zeroes, and
 $l$ poles $x_i\in C$ with multiplicities $m_i$. For the basic properties of these spaces, we refer the reader to \cite{BS}. We always assume that
\[
k:=6g-6+\sum_{i=1}^l (m_i+1)>0.
\]
The space $\Quad(g,m)$ is then a non-empty complex orbifold of dimension $k$.

Given a point $(C,Q_0)\in \Quad(g,m)$ there is a spectral curve $p\colon \Sigma\to C$ branched at the zeroes and odd order poles of $Q_0$. Locally it is given by writing $y^2=Q_0(x)$. There is a covering involution $\sigma\colon \Sigma\to \Sigma$ and a canonical meromorphic 1-form $\lambda=y {\rm d} x$ satisfying $\lambda^{\tensor 2}=p^*(Q_0)$.
We define $\Sigma^0\subset \Sigma$ to be the complement of the poles of $\lambda$.

We consider the homology group $H_1\bigl(\Sigma^0,\bZ\bigr)^-$. The superscript signifies anti-invariance for the covering involution: we consider only classes satisfying $\sigma_*(\gamma)=-\gamma$. A calculation shows that $H_1\bigl(\Sigma^0,\bZ\bigr)^-\isom \bZ^{\oplus k}$ is free of rank $k$.
Given a basis $(\gamma_1,\dots,\gamma_k)\subset H_1\bigl(\Sigma^0,\bZ\bigr)^-$ we define
\begin{equation}\label{zzz}z_i=\int_{\gamma_i} \lambda\in \bC.\end{equation}

As the point $(C,Q_0)\in \Quad(g,m)$ varies the homology groups $H_1\bigl(\Sigma^0,\bZ\bigr)^-$ fit together to form a local system over $\Quad(g,m)$. Transporting the basis elements $\gamma_i$ to nearby points, the resulting functions $(z_1,\dots,z_k)$ form local co-ordinates on $\Quad(g,m)$. In particular, the tangent space at a point of $\Quad(g,m)$ is naturally identified with the cohomology group $H^1(\Sigma^0,\bC)^-$.

The inclusion $i\colon \Sigma^0\hookrightarrow \Sigma$ induces a surjection $i_*\colon H_1\bigl(\Sigma^0,\bZ\bigr)^-\to H_1(\Sigma,\bZ)^-$.
The intersection form on $H_1(\Sigma,\bZ)$ pulls back to an integral skew-symmetric form $\<-,-\>$ on \smash{$H_1\bigl(\Sigma^0,\bZ\bigr)^-$}. This induces a Poisson structure on $\Quad(g,m)$ satisfying
\[
\{z_i,z_j\}=2\pi {\rm i} \<\gamma_i,\gamma_j\>.
\]

The kernel of $i_*$ is spanned (at least rationally) by classes $\pm \beta_i\in H_1\bigl(\Sigma^0,\bZ\bigr)^-$ defined up to sign by the difference of small loops around the two inverse images of an even-order pole of $Q_0$.
The residue of $Q_0$ at such a pole $x_i\in C$ is defined to be the period
\begin{equation}\label{res2}\res_{x_i}(Q_0)=\pm \int_{\beta_i}\lambda\in \bC.\end{equation}
It is well defined up to sign and is a Casimir for the above Poisson structure. Fixing these residues locally cuts out a symplectic leaf in $\Quad(g,m)$ of dimension $n=2d$, where
\begin{equation}\label{d} d=3g-3+ \sum_{i=1}^l  \left\lfloor \half (m_i+1)\right\rfloor.\end{equation}
There are 10 choices of the data $(g,m)$ with $g=0$ for which $n=2$, and these correspond to the Painlev{\'e} equations (see, for example, \cite[Section~2.1]{BLMST}).

We will consider the particular symplectic leaf
\[
M=M(g,m)\subset \Quad(g,m)\]
obtained by requiring all residues \eqref{res2} at even order poles to be zero. Note that a pole of order~2 with zero residue is actually a pole of order 1.

The tangent space to $M$ at a point $(C,q)$ is identified with the cohomology group $H^1(\Sigma,\bC)^-$. Choosing a basis $(\gamma_1,\dots, \gamma_n)\subset H^1(\Sigma,\bZ)^-$ gives local co-ordinates $(z_1,\dots,z_n)$ defined by the formula \eqref{zzz}.
These preferred local co-ordinates define a period structure on $M$. The inverse to the Poisson structure \eqref{res2} defines a symplectic form $\omega$ on $M$.

The remaining data we need to define a (meromorphic) Joyce structure on $M$ is a pencil of flat, symplectic, non-linear (meromorphic) connections $h_\epsilon$ on the tangent bundle $\pi\colon X=T_M\to M$. Recall that the period structure defines a subsheaf $T_M^{\bZ}\subset T_M$ and consider the quotient
\[
 X^\hash=T_M^{\hash}=T_M/(2\pi {\rm i}) T_M^{\bZ}.
 \] We aim to construct a non-linear connection on the projection $\pi\colon X^\hash\to M$. This then pulls back to give a connection on $\pi\colon X\to M$ which moreover automatically satisfies the periodicity condition \eqref{gove2}. Note that the local co-ordinates on $X^\hash$ are $(z_i,\xi_j)$ where $\xi_j={\rm e}^{\theta_j}$.

\subsection{Pencils of projective structures}
We will construct a pencil of non-linear (meromorphic) connections on the projection ${\pi\colon\! X^\hash\!\to\! M}$ by associating to a generic point of the space $X^\hash$ a pencil of projective structures with apparent singularities. The required connections will then be obtained by considering isomonodromic deformations of these projective structures.

We refer to \cite{AB} for a review of projective structures on Riemann surfaces. These objects can be equivalently described as $\PGL_2(\bC)$-opers. We will consider meromorphic projective structures on a compact Riemann surface $C$ of the form
\begin{equation}\label{oper}f''(x)=Q(x,\epsilon) \cdot f(x), \qquad Q(x,\epsilon)=\epsilon^{-2}\cdot {Q_0(x)}+\epsilon^{-1}\cdot Q_1(x) + Q_2(x),\end{equation}
where $\epsilon \in \bC^*$ and $Q_i(x)$ are meromorphic functions.
More invariantly, $Q_0= Q_0(x)  {\rm d}x^{\tensor 2}$ and~${Q_1= Q_1(x)  {\rm d}x^{\tensor 2}}$ are meromorphic quadratic differerentials on $C$, and $Q_2(x)$ represents a meromorphic projective structure.

We will require that the pair $(C,Q_0)$ defines a point of the space $M$.
At a point ${x_i\in C}$ where~$Q_0$ has a pole of order $m_i$ we will insist that $Q_1$ and $Q_2$ have poles of order at most $\bigl\lfloor \half (m_i+1)\bigr\rfloor$. We will allow $Q_1$ and $Q_2$ to have poles at exactly $d$ other points $q_i\in C$, with~$d$ given by \eqref{d}. At these points $Q_1$, $Q_2$ will be required to have the leading-order behaviour
\[
Q_1(x)= \frac{p_i}{x-q_i} + r_i + O(x-q_i), \qquad Q_2(x)=\frac{3}{4(x-q_i)^2} + \frac{s_i}{x-q_i} + u_i +O(x-q_i),
\]
with $p_i,r_i,s_i,u_i\in \bC$.
We will then insist that for all $\epsilon\in \bC^*$ the equation \eqref{oper} has apparent singularities at the points $x=q_i$, i.e., that the monodromy of the associated linear system is trivial asd an element of $\PGL_2(\bC)$.
This is equivalent to the condition
\[
	\bigl(\epsilon^{-1} p_i+s_i\bigr)^2=\epsilon^{-2}Q_0(q_i) +\epsilon^{-1} r_i+u_i,
\]
 for all $\epsilon\in \bC^*$ (see, e.g., \cite[Lemma 2.1]{A2}), and hence to the equations \begin{equation}
\label{conditions}p_i^2=Q_0(q_i), \qquad r_i=2p_is_i, \qquad u_i=s_i^2.\end{equation}
The first of these relations shows that the pair $(q_i,p_i)$ defines a point of the double cover $\Sigma$ associated to the quadratic differential $(C,Q_0)\in M$.

The condition on the pole orders of $Q_1$ at the points $x_i$, together with the equations \eqref{conditions}, ensures that the anti-invariant differential $Q_1/\sqrt{Q_0}$ on the double cover $\Sigma$ has simple poles at the points $(q_i,\pm p_i)$ with residues $\pm 1$ and no other poles. It follows that the expressions
\begin{equation}
 \label{xi2}
\xi_i=\exp\left(-\int_{\gamma_i} \frac{Q_1}{2\sqrt{Q_0}} \right)\in \bC^*\end{equation}
are well defined.

\subsection{Conjectural Joyce structure}
Let $\cO=\cO(g,m)$ denote the space of data $(C,Q_0,Q_1,Q_2)$ satisfying the conditions described in the previous subsection. There is a diagram of maps
\[
\xymatrix@C=1.7em{ \cO\ \ar[rr]^{\alpha} \ar[dr]_{\pi} && X^\hash\ar[dl]^{\pi} \\ & M,}
\]
where $\pi\colon \cO\to M$ sends a point $(C,Q_0,Q_1,Q_2)\in \cO$ to the point $(C,Q_0)\in M$, and $\alpha$ sends a~point $(C,Q_0,Q_1,Q_2)\in \cO$ to the point of $X^\hash$ whose local co-ordinates $(z_i,\xi_j)$ are given by the formulae \eqref{zzz} and \eqref{xi2}.

\begin{Conjecture}\label{firstguess}
The map $\alpha\colon \cO\to X^\hash$ is generically {\'e}tale.
\end{Conjecture}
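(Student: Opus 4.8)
The plan is to exploit the fact that $\alpha$ is a morphism over $M$. Both $\cO$ and $X^\hash$ carry the projection $\pi$ to $M$, and formula \eqref{zzz} shows that the $z$-coordinates of $\alpha(C,Q_0,Q_1,Q_2)$ depend only on $(C,Q_0)$, so $\pi\circ\alpha=\pi$. Since both projections are submersions, $\alpha$ is étale at a point precisely when its restriction to the fibre through that point is étale. It therefore suffices to show that for generic $m=(C,Q_0)\in M$ the induced map $\alpha_m\colon\cO_m\to X^\hash_m$ is étale, where $X^\hash_m=T_{M,m}/(2\pi{\rm i})T^{\bZ}_{M,m}\cong(\bC^*)^n$ carries the coordinates $\xi_j$.

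First I would pin down the dimensions. Fixing $(C,Q_0)$, a point of $\cO_m$ is specified by the positions $q_1,\dots,q_d\in C$ of the apparent singularities together with the parameters $s_1,\dots,s_d$; by \eqref{conditions} the remaining local data $p_i,r_i,u_i$ are then determined (up to the discrete choice of a sheet of $\Sigma$ over each $q_i$). A Riemann--Roch count using the prescribed pole bounds $\lfloor\tfrac{1}{2}(m_i+1)\rfloor$ at the $x_i$ shows that, for generic configurations, $Q_1$ and $Q_2$ are in turn uniquely reconstructed from this local data, so that $\dim\cO_m=2d=n=\dim X^\hash_m$ by \eqref{d}. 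This reconstruction can fail on a proper analytic subset where the relevant cohomology jumps, which is one source of the word \emph{generically}.

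Next I would identify the differential of $\alpha_m$ with a period map. Writing $\omega_{Q_1}=Q_1/(2\sqrt{Q_0})$, the coordinates satisfy $\log\xi_i=-\int_{\gamma_i}\omega_{Q_1}$ by \eqref{xi2}, so a tangent vector $\delta\in T\cO_m$ gives ${\rm d}\log\xi_i(\delta)=-\int_{\gamma_i}\delta\omega_{Q_1}$. Because the residues $\pm1$ of $Q_1/\sqrt{Q_0}$ at the points $(q_i,\pm p_i)$ are rigidly fixed and there are no other poles, the variation $\delta\omega_{Q_1}$ is an anti-invariant meromorphic differential with vanishing residues---a differential of the \emph{second kind}---and hence defines a class $[\delta\omega_{Q_1}]\in H^1(\Sigma,\bC)^-$. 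Since the $\gamma_i$ form a basis of $H_1(\Sigma,\bZ)^-$, the de Rham pairing realises the period map $[\beta]\mapsto\bigl(\int_{\gamma_i}\beta\bigr)_i$ as an isomorphism $H^1(\Sigma,\bC)^-\lRa{\sim}\bC^n$. Thus $\alpha_m$ is étale at a point exactly when the \emph{variation map} $T\cO_m\to H^1(\Sigma,\bC)^-$, $\delta\mapsto[\delta\omega_{Q_1}]$, is an isomorphism there.

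The hard part will be this last step. As both sides have dimension $n=2d$, it is enough to prove injectivity at a single point, since nonvanishing of the Jacobian determinant there forces it on a dense open set. The natural strategy is to compute the principal parts of the $2d$ differentials $\partial_{q_i}\omega_{Q_1}$ and $\partial_{s_i}\omega_{Q_1}$: varying $s_i$ alters only the subleading coefficient $r_i=2p_is_i$ and so yields an anti-invariant differential that is \emph{regular} at the $(q_i,\pm p_i)$ (in fact of the first kind), whereas moving $q_i$ produces a \emph{double pole} at $(q_i,\pm p_i)$ with no residue, the residue being frozen by \eqref{conditions}. One must then check that these $2d$ classes are linearly independent in $H^1(\Sigma,\bC)^-$, and I would do this through the Riemann bilinear relations: pairing a putative exact combination $\sum_i c_i\,\partial_{q_i}\omega_{Q_1}+\sum_i e_i\,\partial_{s_i}\omega_{Q_1}={\rm d}f$ against a basis of anti-invariant holomorphic differentials and analysing the forced principal parts of $f$ at the $(q_i,\pm p_i)$ to conclude that all $c_i$ and $e_i$ vanish. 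The genuine subtlety is that moving $q_i$ simultaneously forces $Q_2$ to adjust through \eqref{conditions}, so the admissible variations are constrained rather than free; controlling their cohomology classes, and identifying the degeneration locus, is the crux of the argument. Alternatively, if one can show $\alpha$ is dominant---for instance by constructing, near a suitable point, an inverse that reconstructs $(Q_1,Q_2)$ from the spectral data $(z_i,\xi_j)$---then generic smoothness in characteristic zero, combined with the equality of dimensions established above, immediately yields that $\alpha$ is generically étale.
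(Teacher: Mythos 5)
Your reduction to the fibres of $\pi$ and the identification of the differential of $\alpha_m$ with a period map $T\cO_m\to H^1(\Sigma,\bC)^-$, $\delta\mapsto[\delta\omega_{Q_1}]$, is sound, and it is a genuinely different strategy from the paper's. The paper's sketch is global rather than infinitesimal: it argues that $(z_i,\xi_j)$ generically \emph{determine} the point of $\cO$ --- the $z_i$ determine $(C,Q_0)$; the holonomies $\xi_i^2$ determine the flat line bundle $\bigl(L,\partial\bigr)$ with $L=\O_{\Sigma}\bigl(\sum_i(q_i,p_i)-\sum_i(q_i,-p_i)\bigr)$ and $\partial={\rm d}-Q_1/\sqrt{Q_0}$ via the abelian Riemann--Hilbert correspondence; Jacobi inversion then recovers the divisor, hence the points $(q_i,p_i)$ and $Q_1$, generically; and $Q_2$ is pinned down because any two choices differ by a section of a line bundle of Euler characteristic $d-d=0$. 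Generic local injectivity between equidimensional complex manifolds then yields generically \'etale. Note that your closing ``alternative'' --- reconstructing $(Q_1,Q_2)$ from $(z_i,\xi_j)$ and invoking generic smoothness in characteristic zero --- is essentially the paper's route, just not executed.

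The genuine gap is that your main route defers its crux: the linear independence in $H^1(\Sigma,\bC)^-$ of the $2d$ classes $[\partial_{q_i}\omega_{Q_1}]$, $[\partial_{s_i}\omega_{Q_1}]$ is only a plan (principal parts plus Riemann bilinear relations), and it is exactly where the constrained nature of the variations bites: $\delta Q_1$ is not a free principal part but a global quadratic differential with the pole bounds at the $x_i$, whose existence with prescribed principal parts requires the same generic $h^1$-vanishing you invoke, without proof, in the dimension count $\dim\cO_m=2d$. There is also a slip: moving $q_i$ constrains $Q_1$ through $p_i^2=Q_0(q_i)$ and $r_i=2p_is_i$ in \eqref{conditions}, but $Q_2$ never enters the $\xi_j$ at all --- indeed the fact that $\alpha_m$ is blind to $Q_2$ is precisely why the paper must argue separately that $Q_2$ is determined by the remaining data, a step which in your write-up is hidden inside the unexecuted Riemann--Roch count. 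By contrast the paper's sketch, though also heuristic (the statement is, after all, a conjecture), supplies an actual argument for each of its steps; and since a holomorphic injection between equidimensional manifolds is a local biholomorphism, the paper's generic-injectivity argument already implies, at generic points, the infinitesimal statement your approach would still need to prove.
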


\begin{proof}[Sketch proof] We must show that at a generic point of $\cO$ the functions $(z_i,\xi_j)$ give local co-ordinates. We know that the co-ordinates $z_i$ locally determine $(C,Q_0)$. The next step is to show that the numbers $\xi_i\in \bC^*$ locally determine $Q_1$, at least generically. Consider the line bundle
\[
L=\O_{\Sigma}\Biggl(\sum_{i=1}^d (q_i,p_i)-\sum_{i=1}^d(q_i,-p_i)\Biggr).
\]
 Since the 1-form $\eta=Q_1/\sqrt{Q_0}$ has simple poles with residues $\pm 1$ at the points $(q_i,\pm p_i)$, it follows that the expression $\partial={\rm d}-\eta$ defines an anti-invariant connection on $L$. Moreover, $\xi_i^2\in \bC^*$ is the holonomy of this connection around the cycle $\gamma_i\in H_1(\Sigma,\bZ)^-$. The abelian Riemann--Hilbert correspondence shows that these holonomies uniquely determine $(L,\partial)$. But now, by a version of the Jacobi inversion theorem, generically the bundle $L$ uniquely determines the points $(q_i,p_i)$.

The final step is to show that the meromorphic projective structure $Q_2$ is uniquely determined by the other data. Any two choices differ by a quadratic differential having poles of order at most $\bigl\lfloor \half (m_i+1)\bigr\rfloor$ at the points $x_i$, and by \eqref{conditions}, regular and vanishing at the points $q_i$. Such an object is a section of the line bundle
\[
\omega_C^{\tensor 2}\left(\sum_{i=1}^l \left\lceil \frac{m_i}{2} \right\rceil\cdot x_i -\sum_{j=1}^{k} (q_i,p_i)\right),
\]
 which has Euler characteristic $d-d=0$. Thus for general positions of the points $q_i\in C$, the projective structure $Q_2$ is uniquely determined.
\end{proof}

For each $\epsilon\in \bC^*$, we can attempt to define a non-linear connection $h_\epsilon$ on the projection $\cO\to M$ by considering isomonodromic deformations. That is, as $(C,Q_0)$ varies, we specify the variation of $(Q_1,Q_2)$ by insisting that the generalised monodromy of \eqref{oper} is constant. Unfortunately, the existence of such an isomonodromy connection has not yet
been established in the literature.

\begin{Conjecture}\label{secondguess}\quad
\begin{itemize}\itemsep=0pt
 \item[$(i)$]
For each $\epsilon\in \bC^*$, there is a flat, meromorphic connection $g_\epsilon$ on the projection $\pi\colon \cO\to M$ whose leaves define deformations of the projective structure \eqref{oper} with constant generalised monodromy.

\item[$(ii)$]
There exist meromorphic connections $h_\epsilon$ on the projection $\pi\colon X^\hash\to M$ whose pullback via the map $\alpha\colon \cO\to X^\hash$ are the connections $g_\epsilon$ of part $(i)$.

\item[$(iii)$] There is a meromorphic Joyce structure on $M$ obtained by combining the period structure and symplectic form on $M$ defined above with the connections $h_\epsilon$ of part $(ii)$.
\end{itemize}
\end{Conjecture}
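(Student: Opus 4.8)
The plan is to obtain the three statements as successive consequences of isomonodromy theory, the {\'e}tale property of Conjecture~\ref{firstguess}, and a verification of the Joyce structure axioms. For part $(i)$, I would reinterpret a point $(C,Q_0,Q_1,Q_2)\in\cO$ as a meromorphic $\PGL_2(\bC)$-oper on $C$, namely the projective structure underlying \eqref{oper}, carrying apparent (trivial) singularities at the points $q_i$ and a fixed irregular type at each pole $x_i$ determined by the leading coefficients of $Q(x,\epsilon)$. Fixing $\epsilon$, the generalised monodromy data --- the $\PGL_2(\bC)$ representation of the fundamental groupoid together with the Stokes data at the irregular poles $x_i$ with $m_i\geq 3$ --- assemble into a local system over $M$ as $(C,Q_0)$ varies. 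I would then define $g_\epsilon$ as the pullback along the monodromy map of the foliation by constant generalised monodromy, so that its leaves are exactly the isomonodromic loci; flatness is automatic because the target space of monodromy data is fixed. Two points need checking: that isomonodromic transport preserves the oper form, so that the flow stays inside $\cO$, and that the apparent singularities remain apparent, which holds since triviality of the local monodromy at $q_i$ is a closed condition preserved by the deformation. I expect the genuine difficulty to lie here: the points $q_i$ themselves move along the flow, so one needs a form of the Jimbo--Miwa--Ueno and Malgrange theory over a base that simultaneously varies the complex structure of $C$, the differential $Q_0$, and hence the irregular types at the $x_i$, while carrying the moving apparent singularities. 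Establishing the existence and regularity of this connection is exactly the analytic content deferred to \cite{Z}.

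Part $(ii)$ is comparatively formal. Since $\alpha\colon\cO\to X^\hash$ is generically {\'e}tale by Conjecture~\ref{firstguess} and commutes with the two projections to $M$, the sub-bundle $\im(g_\epsilon)\subset T_\cO$ maps isomorphically under $\alpha_*$ to a sub-bundle of $T_{X^\hash}$ that splits the sequence $0\to V(\pi)\to T_{X^\hash}\to\pi^*(T_M)\to 0$; this splitting is the required connection $h_\epsilon$. One checks that the image distribution is independent of the choice of local inverse of $\alpha$, which follows from $\alpha$ being a local biholomorphism, and that involutivity, hence flatness, is transported by the isomorphism $\alpha_*$.

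For part $(iii)$, I would combine the $h_\epsilon$ with the period structure and symplectic form already built on $M$ and verify the Joyce structure axioms. The crucial point is the pencil form $h_\epsilon=h+\epsilon^{-1}v$. Expanding the WKB exponent using \eqref{zzz} and \eqref{xi2} gives $\int_{\gamma_i}\sqrt{Q(x,\epsilon)}\,{\rm d}x=\epsilon^{-1}z_i-\log\xi_i+O(\epsilon)$, exhibiting $\epsilon^{-1}z_i-\log\xi_i$ as the leading monodromy coordinate; the constant-monodromy condition then forces ${\rm d}\log\xi_i=\epsilon^{-1}\,{\rm d}z_i+\cdots$ along the leaves, which in the coordinates $\theta_j=\log\xi_j$ reads $h_\epsilon(\partial/\partial z_i)=\partial/\partial z_i+\epsilon^{-1}\partial/\partial\theta_i+(\epsilon\text{-independent terms})$, matching \eqref{above}. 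The precise affine dependence on $\epsilon^{-1}$ would be confirmed from the explicit isomonodromy equations, exactly as in the $A_2$ computation of \cite{A2}. The remaining axioms are then routine: $\eta_{pq}\in 2\pi{\rm i}\bZ$ follows from $\{z_i,z_j\}=2\pi{\rm i}\<\gamma_i,\gamma_j\>$ and integrality of the intersection form; the periodicity \eqref{gove2} is automatic since the construction is carried out on $X^\hash$; homogeneity \eqref{reeves2} follows from the scaling $z_i\mapsto\lambda z_i$, corresponding to $Q_0\mapsto\lambda^2 Q_0$, together with a compensating rescaling of $\epsilon$; and the anti-symmetry under $\theta\mapsto-\theta$ comes from the covering involution $\sigma$ exchanging the sheets $(q_i,\pm p_i)$. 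Symplecticity of the pencil follows from the compatibility of isomonodromic transport with the natural holomorphic symplectic structure on the space of generalised monodromy data, which under the Riemann--Hilbert correspondence matches the constant relative form on the fibres of $X^\hash\to M$. Granting part $(i)$, the existence of a single Pleba{\'n}ski function $W$ and the second heavenly equations then follow from \cite{BJoy} as recalled above.
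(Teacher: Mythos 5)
The statement you were asked to prove is a \emph{Conjecture}, and the paper does not prove it: immediately afterwards it states that ``the existence of such an isomonodromy connection has not yet been established in the literature,'' records only that the conjecture is known for some specific choices of $(g,m)$ (the $A_2$ case of \cite{A2}; the case $l=0$ by the different method of \cite{CH}), and defers a rigorous construction to the forthcoming work \cite{Z}, which moreover proceeds via bundles with connection rather than via projective structures with apparent singularities. So there is no proof in the paper to compare yours against, and any genuine proof would have to supply substantial analytic content that the paper deliberately leaves open. Your sketch, judged on its own terms, reproduces the paper's motivating heuristics but defers exactly the hard points: for part $(i)$ you correctly identify the difficulty (a Jimbo--Miwa--Ueno/Malgrange theory over a base that simultaneously varies $C$, $Q_0$, the irregular types at the $x_i$, and the moving apparent singularities $q_i$) and then explicitly hand it off to \cite{Z} --- but that \emph{is} the content of part $(i)$, so nothing is established.

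There is also a concrete error of logic in your treatment of part $(ii)$, which you call ``comparatively formal.'' Generic \'etaleness of $\alpha\colon \cO\to X^\hash$ does not make $\alpha$ injective; where two branches of $\alpha$ meet the same point of $X^\hash$ there is no a priori reason that the two image distributions $\alpha_*(\im(g_\epsilon))$ agree, and your justification (``follows from $\alpha$ being a local biholomorphism'') is circular --- it yields a pushforward along each branch but says nothing about agreement between branches. This is precisely why the paper's remark following the conjecture asserts only \emph{uniqueness} (``there can be at most one meromorphic connection on $\pi\colon X^\hash\to M$ which lifts...'') while keeping \emph{existence} of the descended connection as the conjectural content of part $(ii)$. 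Finally, in part $(iii)$ your derivation of the pencil form $h_\epsilon=h+\epsilon^{-1}v$ from the WKB expansion is heuristic: $\epsilon^{-1}z_i-\log\xi_i$ is only the asymptotic value of the monodromy coordinates as in \eqref{asym}, not an exact invariant, so constancy of the exact generalised monodromy does not force ${\rm d}\log\xi_i=\epsilon^{-1}{\rm d}z_i+(\epsilon\text{-independent terms})$ along leaves; you concede this by deferring to ``explicit isomonodromy equations,'' and likewise the symplecticity of the pencil is asserted via a symplectic property of the monodromy map that is itself unproved in this generality. In short: your outline is a fair expansion of the paper's informal discussion, but every step beyond that discussion is either deferred or, in part $(ii)$, incorrectly claimed to be automatic; it should be presented as a programme consistent with the paper's conjecture, not as a proof.
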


Note that if Conjecture \ref{firstguess} holds, then there can be at most one meromorphic connection on $\pi\colon X^\hash\to M$ which lifts to a given meromorphic connection on $\pi\colon \cO\to M$.
We note that Conjecture \ref{secondguess} is known to hold for some specific choices of the data $(g,m)$. A different construction of the Joyce structures of Conjecture \ref{secondguess} will be given in \cite{Z} using bundles with connection rather than projective structures with apparent singularities.

\section{Further geometry of twistor space}
\label{twist}

 In this section, we discuss further structures on the twistor spaces of a Joyce structure which relate to choices of symplectic potentials on the twistor fibres $Z_0$, $Z_1$ and $Z_\infty$, and are highly relevant to the definition of the $\tau$-function in Section \ref{tausection}. We try to explain how these structures appear naturally in the conjectural examples of class $S[A_1]$ of Section \ref{geometric}, although many details are still missing. It would be interesting to study these structures in further examples, and examine the extent to which they can be defined in general.

\subsection[Cotangent bundle structure on M=Z\_0]{Cotangent bundle structure on $\boldsymbol{M=Z_0}$}
\label{cot}

Let $M$ be a complex manifold with a holomorphic symplectic form $\omega\in H^0\bigl(M,\wedge^2 T_M^*\bigr)$.
By a \emph{cotangent bundle structure} on $M$ we mean the data of a complex manifold $B$ and an open embedding $M\subset T_B^*$, such that $\omega$ is the restriction of the canonical symplectic form on $T_B^*$. We denote by $\rho\colon M\to B$ the induced projection map, and by $\lambda\in H^0(M,T^*_M)$ the restriction to $M$ of the canonical Liouville 1-form on $T_B^*$.

Given local co-ordinates $(t_1,\dots,t_d)$ on $B$, there are induced linear co-ordinates $(s_1,\dots, s_d)$ on the cotangent spaces $T_{B,b}^*$ obtained by writing a 1-form as $\sum_i s_i {\rm d}t_i$. In the resulting co-ordinates~${(s_i,t_i)}$ on $M$, we have
$
\omega=\sum_i {\rm d}t_i\wedge {\rm d}s_i$, $ \lambda=\sum_i s_i {\rm d}t_i$.

Note that $d(-\lambda)=\omega$. When $M=Z_0$ is the base of a homogeneous Joyce structure there is a~$\bC^*$-action on $M$ satisfying $\cL_E(\omega)=2\omega$. It is then natural to seek a cotangent bundle structure~${M\subset T_B^*}$ such that $\cL_E(\lambda)=2\lambda$. Note that this implies that the $\bC^*$-action on $M$ preserves the distribution of vertical vector fields for $\rho$, since this coincides with the kernel of $\lambda$. It follows that there is a $\bC^*$-action on $B$, and that the $\bC^*$-action on $M$ is given by the combining the induced action on $T_B^*$ with a rescaling of the fibres with weight 2.

Consider the conjectural Joyce structures of class $S[A_1]$ of Section \ref{geometric}, and assume for simplicity that $l=0$. Then $M$ parameterises pairs $(C,Q_0)$ consisting of a Riemann surface of genus $g$ equipped with a holomorphic quadratic differential $Q_0\in H^0\bigl(C,\omega_C^{\tensor 2}\bigr)$ with simple zeroes. The base $M$ then has a natural cotangent bundle structure, with $B$ the moduli space of curves of genus $g$, and $\rho\colon M\to B$ the obvious projection. Indeed, the tangent space to the moduli space of curves is $H^1(C,T_C)$, and Serre duality gives $H^0\bigl(C,\omega_C^{\tensor 2}\bigr)=H^1(C,T_C)^*$. Thus $M$ is an open subset of $T_B^*$ obtained by requiring that $Q_0$ has simple zeroes.

We note in passing that in the class $S[A_1]$ case the fibres of $\rho$ have a highly non-trivial compatibility with the Joyce structure: in the language of \cite[Section 8]{CH} they are good Lagrangians. See also the closely related notion of a projectable hyper-Lagrangian foliation from~\cite{DM2}. In the meromorphic case $l>0$, there is a similar picture, but the space $B$ is a moduli space of wild curves: the map $\rho$ remembers the curve $C$ together with the most singular half of the meromorphic tail of the differential $Q_0$ at each pole $x_i\in C$.

\subsection[Cluster-type structure on Z\_1]{Cluster-type structure on $\boldsymbol{Z_1}$}
\label{clust}

A crucial part of the definition of the $\tau$-function in Section \ref{tausection} will be the existence of collections of \emph{preferred co-ordinate systems} $(x_1,\dots,x_n)$ on the twistor fibre $Z_1$. These should be Darboux in the sense that
\[
\Omega_1 = \half \sum_{i,j} \omega_{ij} \cdot {\rm d}x_i\wedge {\rm d}x_j.\]

Although it will not be needed below, the preferred co-ordinates $(x_1,\dots,x_n)$ are expected to have the following asymptotic property. Consider an integral linear co-ordinate system on $M$ and the associated co-ordinate system on $X$ and take a point $x\in X$ with co-ordinates $(z_i,\theta_j)$. For $\epsilon \in \bC^*$, consider the point $\epsilon^{-1}\cdot x\in X$ with co-ordinates $\bigl(\epsilon^{-1} z_i,\theta_j\bigr)$ obtained by acting by~${\epsilon^{-1}\in \bC^*}$ on the point $x\in X$. The claim is that provided $x\in X$ is suitably generic there should be a preferred system of co-ordinates $(x_1,\dots,x_n)$ on $Z_1$ such that
 \begin{equation}
\label{asym}
x_i\bigl(\epsilon^{-1}\cdot x\bigr)\sim -\epsilon^{-1}z_i+\theta_i
\end{equation}
as $\epsilon\to 0$ in the half-plane $\Re(\epsilon)>0$.

When the Joyce structure is constructed via the DT RH problems of \cite{RHDT2}, the required preferred co-ordinates systems $x_i$ are given directly by the solutions to the RH problems, and the property \eqref{asym} holds by definition. Thus for the Joyce structures of interest in DT theory there is no problem finding this additional data. It is still interesting however to ask whether such distinguished co-ordinate systems exist for general Joyce structures. A heuristic explanation for why this might be the case can be found in \cite[Section 4.3]{BJoy}.

Consider the conjectural Joyce structures of class $S[A_1]$ of Section \ref{geometric}. Associated to the data~${g\geq 0}$ and
$m=\{m_1,\dots,m_l\}$ is a topological object $(\bS,\bM)$ called a marked bordered surface. It is a compact, connected, oriented surface with boundary $\bS$, together with a finite set of marked points $\bM\subset \partial \bS$. We specify the pair $(\bS,\bM)$ up to diffeomorphism by requiring that $\bS$ has genus~$g$, and $l$ boundary components, and that the numbers of marked points on the various boundary components are the integers $m_i-2\geq 0$.

Let us fix $\epsilon\in \bC^*$. Ignoring some subtleties \cite{AB}, the generalised monodromy of the projective structure \eqref{oper} then defines a $\PGL_2(\bC)$ framed local system on $(\bS,\bM)$ in the sense of Fock and Goncharov \cite{FG}.
By the definition of the Joyce structure in Conjecture \ref{secondguess}, this generalised monodromy is constant along the leaves of the foliation defined by $h_\epsilon$. We therefore expect an {\'e}tale map
\begin{equation}
 \label{stu}
 \mu_\epsilon\colon\ Z_\epsilon\to \cX(g,m)/\MCG(g,m),
\end{equation}
where $\cX(g,m)$ is the space of framed local systems on the surface $(\bS,\bM)$, and $\MCG(g,m)$ denotes the mapping class group.

When $l>0$ the preferred co-ordinate systems are expected to be the logarithms of Fock--Goncharov co-ordinates \cite{FG,GMN2}, and the asymptotic property \eqref{asym} should be a consequence of exact WKB analysis.
In the case $g=0$ and $m=\{7\}$, this story is treated in detail in~\cite{A2}.
In the case $l=0$ of holomorphic quadratic differentials, the expected picture is less well understood, but it has been suggested that the required co-ordinates on the character variety are the Bonahon--Thurston shear-bend co-ordinates \cite{Bon}.

\subsection[Lagrangian submanifolds of Z\_infty]{Lagrangian submanifolds of $\boldsymbol{ Z_\infty}$}
\label{cat}

At various points in what follows it will be convenient to choose a $\bC^*$-invariant Lagrangian submanifold $R\subset Z_\infty$ in the twistor fibre at infinity. For example, in the next section we will see that
the choice of such a Lagrangian, together with a cotangent bundle
structure on $Z_0$, leads to a time-dependent Hamiltonian system. Unfortunately, for the Joyce structures arising from the DT RH problems of~\cite{RHDT2}, the geometry of the twistor fibre $Z_\infty$ is currently quite mysterious, since it relates to the behaviour of solutions to the RH problems at $\epsilon=\infty$. So it is not yet clear whether such a Lagrangian submanifold can be expected to exist in general.

When discussing the twistor fibre $Z_\infty$ for Joyce structures of class $S[A_1]$, it is important to distinguish the cases $l=0$ and $l>0$. When $l=0$ the argument leading to the {\'e}tale map~\eqref{stu} applies also when $\epsilon=\infty$, because the monodromy of the projective structure \eqref{oper} lives in the same space for all $\epsilon^{-1}\in \bC$.
In contrast, in the case $l>0$, the projective structure \eqref{oper} has poles of order $m_i$ at the points $x_i\in C$ for all $\epsilon \in \bC^*$, but when $\epsilon=\infty$ these poles are of order~${\bigl\lfloor \half(m_i+1)\bigr\rfloor}$. This means that the direct analogue of the map \eqref{stu} for $\epsilon=\infty$ takes values in a space of framed local systems of a strictly lower dimension, and the fibres of this map are not well understood at present. A closely related problem is to understand the behaviour of the Fock--Goncharov co-ordinates of the projective structure \eqref{oper} in the limit as $\epsilon\to \infty$.

Another relevant difference between the holomorphic and meromorphic cases can be seen in the action of $\bC^*$ on $Z_\infty$. The action of $\bC^*$ on $X$ corresponds to fixing the curve $C$ and rescaling the differentials $(Q_0,Q_1,Q_2)$ with weights $(2,1,0)$ respectively. In the holomorphic case $l=0$ the twistor fibre $Z_\infty$ is locally parameterising the pair $(C,Q_2)$ and the $\bC^*$-action is therefore trivial. As in the previous paragraph, in the meromorphic case $l>0$ the twistor fibre $Z_\infty$ is not so easily described, and the induced action on $Z_\infty$ is in general non-trivial, as can be seen explicitly in the example of Section~\ref{a2}.

\subsection{Hamiltonian systems}
\label{hamintro}

A \emph{time-dependent Hamiltonian system} consists of the following data:
\begin{itemize}\itemsep=0pt
\item[(i)] a submersion $f\colon Y\to B$ with a relative symplectic form \smash{$\Omega\in H^0\bigl(Y,\wedge^2 T^*_{Y/B}\bigr)$},
\item [(ii)] a flat, symplectic connection $k$ on $f$,
\item[(iii)] a section $\varpi\in H^0(Y, f^*(T_B^*))$.
\end{itemize}

For each vector field $u\in H^0(B,T_B)$ there is an associated function
 \[
 H_u=(f^*(u),\varpi)\colon\ Y\to \bC.\]
There is then a pencil $k_\epsilon$ of symplectic connections on $f$ defined by
\[
k_\epsilon(u)=k(u)+\epsilon^{-1} \cdot \Omega^\sharp({\rm d}H_u).
\]
The system is called \emph{strongly-integrable} if these connections are all flat.

These definitions become more familiar when expressed in local co-ordinates. If we take co-ordinates $t_i$ on the base $B$, which we can think of as times, and $k$-flat Darboux co-ordinates~${(q_i,p_i)}$ on the fibres of $f$, we can write $\varpi=\sum_i H_i  {\rm d}t_i$ and view the functions $H_i\colon Y\to \bC$ as time-dependent Hamiltonians. The connection $k_\epsilon$ is then given by the flows
\begin{equation}\label{noteat}
k_\epsilon\left(\frac{\partial}{\partial t_i}\right)=\frac{\partial}{\partial t_i}+\frac{1}{\epsilon} \cdot \sum_j\left(\frac{\partial H_i}{\partial p_j}\frac{\partial }{\partial q_j}-\frac{\partial H_i}{\partial q_j}\frac{\partial }{\partial p_j}\right).
\end{equation}
The condition that the system is strongly-integrable is that
\begin{equation}\label{wan}
\sum_{r,s}\left(\frac{\partial H_i}{\partial q_r}\cdot \frac{\partial H_j}{\partial p_s}-\frac{\partial H_i}{\partial q_s}\cdot \frac{\partial H_j}{\partial p_r}\right)=0, \qquad \frac{\partial H_i}{\partial t_j}=\frac{\partial H_j}{\partial t_i}.
\end{equation}

Let $L\subset Y$ denote a leaf of the foliation $k_1$. The restriction $\varpi|_L$ is then closed, so we can write $\varpi|_L={\rm d}\log(\tau_L)$ for some locally-defined function $\tau_L\colon L\to \bC^*$. In terms of co-ordinates, since the projection $\pi\colon L\to B$ is a local isomorphism, we can lift $t_i$ to co-ordinates on $L$, whence we have
\begin{equation}\label{row}\frac{\partial}{\partial t_i} \log(\tau_L) =H_i|_L.\end{equation}
A $\tau$-function in this context is a locally-defined function $\tau\colon Y\to \bC^*$ whose restriction $\tau|_L$ to each leaf $L\subset Y$ satisfies \eqref{row}. Note that this definition only specifies $\tau$ up to multiplication by the pullback of an arbitrary function on the space of leaves.

\subsection{Hamiltonian systems from Joyce structures}
\label{hamham}

Let $M$ be a complex manifold equipped with a Joyce structure. Suppose
 also given
\begin{itemize}\itemsep=0pt
\item[(i)] a cotangent bundle structure $M\subset T^*_B$,

\item[(ii)] Lagrangian submanifold $R\subset Z_\infty$.
\end{itemize}
For the notion of a cotangent bundle structure, see Section \ref{cot}. We denote by $\rho\colon M\to B$ the induced projection, and by $\beta\in H^0(M,\rho^* (T_B^*))$ the tautological section. Note that $\beta$ is almost the same as the Liouville form $\lambda\in H^0(M,T_M^*)$ appearing in Section \ref{cot}: they correspond under the inclusion $\rho^*(T_B^*)\hookrightarrow T_M^*$ induced by $\rho$.

Set $Y=q_\infty^{-1}(R)\subset X$, and denote by $i\colon Y\hookrightarrow X$ the inclusion. There are maps
\[
\xymatrix@C=1em{ Y \ar@{^{(}->}[rr]^{i} && X \ar[rr]^{\pi} && M \ar[rr]^{\rho} && B.} \]
Define $p\colon Y\to M$ and $f\colon Y\to B$ as the composites $p=\pi\circ i$ and $f=\rho\circ \pi\circ i$.
We make the following transversality assumption:
\begin{itemize}\itemsep=0pt
\item[($\star$)] For each $b\in B$ the restriction of $q_1\colon X\to Z_1$ to the fibre $f^{-1}(b)\subset Y\subset X$ is {\'e}tale.\end{itemize}

The following result was proved in \cite{BJoy}.

\begin{Theorem}
\label{inta}
Given the above data there is a strongly-integrable time-dependent Hamiltonian system on the map $f\colon Y\to B$ uniquely specified by the following conditions:
\begin{itemize}\itemsep=0pt
 \item[$(i)$] the relative symplectic form $\Omega$ is induced by the closed $2$-form $ i^*(2{\rm i}\Omega_I)$ on $Y$;
 \item[$(ii)$] for each $\epsilon\in \bC^*$ the connection $k_\epsilon$ on $f\colon Y\to B$ satisfies
 \[
 \im (k_\epsilon)=T_Y\cap \im (h_\epsilon)\subset T_X;
 \]
 \item[$(iii)$] the Hamiltonian form is $\varpi=p^*(\beta)\in H^0(Y,f^*(T_B^*))$. \end{itemize}
\end{Theorem}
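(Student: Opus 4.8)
The plan is to verify that the three pieces of data $(\Omega, k, \varpi)$ specified in $(i)$--$(iii)$ genuinely constitute a time-dependent Hamiltonian system in the sense of Section \ref{hamintro}, and then to check strong integrability. First I would address well-definedness of the relative symplectic form. By the Remark on twistor fibres, the closed $2$-form $2\mathrm{i}\,\Omega_I$ on $X$ has kernel $H(\epsilon)$ only in combination with $\Omega_\pm$; on its own we must check that $i^*(2\mathrm{i}\,\Omega_I)$ descends to a nondegenerate relative form on the fibres of $f\colon Y\to B$. The key point is that $Y=q_\infty^{-1}(R)$ with $R\subset Z_\infty$ Lagrangian, so $i^*(\Omega_-)=i^*q_\infty^*(\Omega_\infty)$ vanishes on the relative tangent directions; this pins down the behaviour of $i^*\Omega_I$ along the fibres. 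The transversality assumption $(\star)$, which says $q_1$ restricted to $f^{-1}(b)$ is \'etale, is exactly what guarantees that $i^*(2\mathrm{i}\,\Omega_I)$ is nondegenerate on each fibre of $f$, since by \eqref{ide2} the pullback of $\Omega_1$ agrees with $\Omega_++2\mathrm{i}\,\Omega_I+\Omega_-$ and the first and third summands drop out on $Y$ relative to $B$.

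Next I would construct the connection $k$ and the pencil $k_\epsilon$ via condition $(ii)$, namely $\im(k_\epsilon)=T_Y\cap\im(h_\epsilon)$. The substance here is to show that $T_Y\cap H(\epsilon)$ is genuinely the image of a splitting of $f_*$, i.e.\ that it projects isomorphically onto $f^*(T_B)$. That the intersection is transverse and of the right rank follows from the Lagrangian condition on $R$ together with $(\star)$: the leaves of $h_\infty$ are the fibres of $q_\infty$, and $Y$ is a union of such leaves over the Lagrangian $R$, so $T_Y\supset H(\infty)\cap T_Y$ in a controlled way, while the flatness of each $h_\epsilon$ descends to flatness of $k_\epsilon$ because intersecting an involutive distribution with the integrable $T_Y$ preserves involutivity. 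Symplecticity of $k_\epsilon$ with respect to $\Omega$ follows because $h_\epsilon$ is symplectic for the pre-Joyce structure and $\Omega$ is the restriction of a twistor form. I would then identify the Hamiltonian form $\varpi=p^*(\beta)$ from $(iii)$ and verify, using the relation $\beta\leftrightarrow\lambda$ under $\rho^*(T_B^*)\hookrightarrow T_M^*$ and the identity $\mathrm{d}(-\lambda)=\omega$ from Section \ref{cot}, that the associated Hamiltonians $H_u=(f^*(u),\varpi)$ generate precisely the correction term $\epsilon^{-1}\Omega^\sharp(\mathrm{d}H_u)$ distinguishing $k_\epsilon$ from $k$. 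Concretely, this amounts to matching the local formula \eqref{noteat} against the local expression for $h_\epsilon$ in \eqref{above} after passing to the cotangent-bundle Darboux coordinates $(s_i,t_i)$ on $M$ and restricting to $Y$.

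The main obstacle I expect is the strong integrability, i.e.\ verifying both equations in \eqref{wan}: flatness of every $k_\epsilon$ simultaneously, which is the first (Poisson-commutativity) relation, and the closedness/symmetry condition $\partial H_i/\partial t_j=\partial H_j/\partial t_i$. The first relation should follow cleanly from the flatness of the pencil $h_\epsilon$ of the Joyce structure—that is, from Pleba\'nski's second heavenly equations \eqref{jude}—once one checks that intersection with $T_Y$ and projection to $B$ intertwine the bracket $[h_i+\epsilon^{-1}v_i,\,h_j+\epsilon^{-1}v_j]=0$ with the Hamiltonian brackets of the $H_i$; this is where the Lagrangian hypothesis on $R$ does real work, ensuring the vertical ($V$) components that would obstruct the reduction are killed. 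The genuinely delicate step is the symmetry $\partial_{t_j}H_i=\partial_{t_i}H_j$, equivalently $\mathrm{d}\varpi$ being of the right type: I would derive this from the fact that $\varpi=p^*(\beta)$ pulls back a tautological section whose exterior derivative is the pullback of $\omega$, combined with $i^*(\Omega_-)$ restricting to zero on $Y$ and the weight-$0$ homogeneity $\cL_E(\Omega_-)=0$ from \eqref{scales}. Since the full statement is attributed to \cite{BJoy}, I would at this point invoke that reference for the detailed heavenly-equation bookkeeping, presenting the argument above as the conceptual skeleton and flagging the second equation in \eqref{wan} as the computation requiring the most care.
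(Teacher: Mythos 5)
Your proposal cannot be compared against an in-paper argument, because the paper does not prove Theorem \ref{inta} at all: it states the result and defers the proof entirely to \cite{BJoy}. Judged on its own terms, your skeleton is sound where it supplies detail. The nondegeneracy argument is exactly right: on a fibre $F=f^{-1}(b)$ both $q_0^*(\Omega_0)$ and $q_\infty^*(\Omega_\infty)$ pull back to zero (since $\pi(F)$ lies in a cotangent fibre of $\rho$, which is Lagrangian for $\omega$, and $R\subset Z_\infty$ is Lagrangian), so by \eqref{ide2} the restriction of $2{\rm i}\Omega_I$ to $F$ equals $j^*q_1^*(\Omega_1)$, which $(\star)$ makes nondegenerate. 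Your construction of $k_\epsilon$ also works once the rank count is made explicit: $(\star)$ gives $T_{Y/B}\cap \im(h_1)=0$, so $T_Y\cap\im(h_1)\to f^*(T_B)$ is injective, and the fibrewise estimate $\operatorname{rk}(T_Y)+\operatorname{rk}(H(1))-\operatorname{rk}(T_X)=(n+d)+n-2n=d$ forces it to be an isomorphism; involutivity of a constant-rank intersection of involutive distributions then gives flatness, as you say. Your proposed mechanism for the symmetry $\partial H_i/\partial t_j=\partial H_j/\partial t_i$, via closedness of $p^*(\lambda)$ along leaves of $k_1$ obtained from $i^*q_\infty^*(\Omega_\infty)=0$ and the Euler contraction, parallels the computation the paper performs in Section \ref{spam} (which there runs in the opposite direction, presupposing the theorem).

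Two gaps are worth flagging. First, $(\star)$ is stated only for $q_1$, so your transversality argument literally produces $k_1$ alone; to obtain $k_\epsilon$ for all $\epsilon\in\bC^*$, as condition (ii) requires, you must either assume the analogous condition for each $q_\epsilon$ or invoke homogeneity of the Joyce structure together with $\bC^*$-invariance of $R$ (as in Sections \ref{cat} and \ref{spam}) to transport $(\star)$ from $\epsilon=1$ to all $\epsilon$; your sketch is silent on this, and your remark that ``$T_Y\supset H(\infty)\cap T_Y$'' is vacuous as written (the relevant fact is $H(\infty)\subset T_Y$, which handles only $\epsilon=\infty$). Second, the identification of the pencil cut out by condition (ii) with the Hamiltonian pencil $k_\epsilon=k+\epsilon^{-1}\Omega^\sharp({\rm d}H_u)$ for the Hamiltonians $H_i=p^*(s_i)$, together with the first (Poisson-commutation) half of \eqref{wan}, is the genuine analytic content of the theorem, and you defer it to \cite{BJoy}. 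That deferral is defensible here, since the paper defers the entire statement to the same reference, but it means your write-up is a correct conceptual skeleton rather than a self-contained proof.
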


To make condition (iii) more explicit, take local co-ordinates $(t_1,\dots, t_d)$ on $B$, and extend them to co-ordinates $(s_i,t_j)$ on $M$ as in Section \ref{cot}. We can also extend to local co-ordinates~${(t_i,q_j,p_k)}$ on $Y$ as in Section \ref{hamintro}. Then $\beta=\sum_i s_i \cdot \rho^*({\rm d}t_i)$ and the connection $k_\epsilon$ is given by the flows \eqref{noteat} with Hamiltonians $H_i=p^*(s_i)$.
The main non-trivial claim is that the conditions~\eqref{wan} hold for these Hamiltonians.

\section[The Joyce structure tau-function]{The Joyce structure $\boldsymbol{\tau}$-function}
\label{tausection}

In this section, we define the $\tau$-function associated to a Joyce structure on a complex manifold~$M$, and discuss some of its basic properties. It is most naturally viewed as the unique up-to-scale local flat section of a flat line bundle on $X=T_M$. Given a choice of section of this line bundle it becomes a locally-defined function on $X$.

\subsection[Definition of the tau-function]{Definition of the $\boldsymbol{\tau}$-function}

Let $M$ be a complex manifold equipped with a Joyce structure as above, and let $p\colon Z\to \bP^1$ be the associated twistor space. We set
$\Theta_I= i_E(\Omega_I)$, so that as in Section \ref{ab} we have ${\rm d}\Theta_I=\Omega_I$.
Recall the identity of closed 2-forms on $X$
 \begin{equation}
 \label{ide}q_1^*(\Omega_1)=q_0^*(\Omega_0)+2{\rm i} \Omega_I + q_\infty^*(\Omega_\infty).\end{equation}
We start by giving the definition of the $\tau$-function in explicit local form. The geometrically-minded reader is encouraged to read the next section first.

\begin{Definition}
\label{deftau}
Choose locally-defined symplectic potentials:
\begin{itemize}\itemsep=0pt
\item[(i)] $\Theta_0$ on $Z_0$ satisfying ${\rm d}\Theta_0=\Omega_0$ and $\cL_E(\Theta_0)=2\Theta_0$,
\item[(ii)] $\Theta_1$ on $Z_1$ satisfying ${\rm d}\Theta_1=\Omega_1$,
\item[(iii)] $\Theta_\infty$ on $Z_\infty$ satisfying ${\rm d}\Theta_\infty=\Omega_\infty$ and $\cL_E(\Theta_\infty)=0$.
\end{itemize}
Then the corresponding \emph{$\tau$-function} is the locally-defined function on $X$ uniquely specified up to multiplication by constants by the relation
\begin{equation}
\label{tau}{\rm d}\log(\tau)=q_0^*(\Theta_0)+2{\rm i}\Theta_I +q_\infty^*(\Theta_\infty)-q_1^*(\Theta_1).\end{equation}
\end{Definition}

In the case of a homogeneous Joyce structure, we can pull back $\tau$ via the action
map $m\colon \bC^*\times X \to X$ so that it becomes a function also of $\epsilon\in \bC^*$. Restricted to the slice $\{\epsilon\}\times X$ it then satisfies
\begin{equation}
 \label{tau2}
 {\rm d}\log(\tau) = \epsilon^{-2}q_0^*(\Theta_0)+2{\rm i}\epsilon^{-1}\Theta_I +q_\infty^*(\Theta_\infty)-q_\epsilon^*(\Theta_\epsilon),
 \end{equation}
 where the symplectic potential $\Theta_\epsilon$ is defined by the relation $q_\epsilon^*(\Theta_\epsilon)=m_{\epsilon^{-1}}^* (q_1^*(\Theta_1))$, and we used the relation $\cL_E(\Theta_I)=\Theta_I$. Although the extra parameter $\epsilon$ is redundant, it is frequently useful to introduce it, for example, so as to expand $\tau$ as an asymptotic series.

 To obtain a well-defined $\tau$-function, we of course need to give some prescription for choosing the symplectic potentials $\Theta_0$, $\Theta_1$ and $\Theta_\infty$. We make some general remarks about this in Section~\ref{prims}, although
some aspects are still unclear. In particular, the choice of $\Theta_\infty$ remains quite mysterious. In later sections, we will show how to make appropriate choices in specific examples.

\subsection{Global description}
Let us assume that the de Rham cohomology class of the form $\frac{1}{2\pi {\rm i}}\cdot \Omega_1$ on the twistor fibre~$Z_1$ is integral. As explained in Appendix \ref{app}, this implies that there is a line bundle with connection~${(L_1,\nabla_1)}$ on $Z_1$ with curvature form $\Omega_1$. Note that the forms $\Omega_0$ and $2{\rm i}\Omega_I$ are exact by~\eqref{scales2}, so the relation \eqref{qe} shows that the same integrality condition holds for all the twistor fibres~$Z_\epsilon$. In particular, there are line bundles with connection $(L_0,\nabla_0)$ and $(L_\infty,\nabla_\infty)$ on the twistor fibres~$Z_0$ and $Z_\infty$, with curvature forms $\Omega_0$ and $\Omega_\infty$, respectively.

The identity \eqref{ide} shows that the connection
\[
\big(q_0^*(\nabla_0)\tensor 1\tensor 1\big) + \big(1\tensor q_\infty^*(\nabla_\infty)\tensor 1\big)- \big(1\tensor 1\tensor q_1^*(\nabla_1)\big)
+2{\rm i} \Theta_I\]
on the line bundle $q_0^*(L_0)\tensor q_\infty^*(L_\infty) \tensor q_1^*( L_1)^{-1}$
is flat. Locally on $X$ there is therefore a unique flat section up to scale, which we call the $\tau$-section.

Given a section $s_0\in H^0(U,L_0)$ over an open subset $U\subset Z_0$ we can write $\nabla_0(s_0)=\Theta_0\cdot s_0$ for a 1-form $\Theta_0$ on $U$. Then ${\rm d}\Theta_0=\Omega_0|_U$, so that $\Theta_0$ is a symplectic potential for $\Omega_0$ on this open subset. This construction defines a bijection between local sections of $L_0$ up to scale, and local symplectic potentials for $\Omega_0$.
Let us take local sections $s_0$, $s_1$ and $s_\infty$ of the bundles~$L_0$,~$L_1$ and~$L_\infty$ respectively, and let $\Theta_0$, $\Theta_1$ and $\Theta_\infty$ be the corresponding symplectic potentials. Then we can write the $\tau$-section in the form $\tau\cdot \bigl(s_0\tensor s_\infty\tensor s_1^{-1}\bigr)$, and the resulting locally-defined function~$\tau$ on $X$ will satisfy the equation \eqref{tau}.

We can constrain the possible choices of local sections $s_0$ and $s_\infty$ using the $\bC^*$-actions on the fibres $Z_0$ and $Z_\infty$. The relations $\cL_E(\Omega_0)=2\Omega_0$ and $\cL_E(\Omega_\infty)=0$ show that the symplectic forms $\Omega_0$ and $\Omega_\infty$ are homogeneous for this action. It follows that the action lifts to the pre-quantum line bundles $L_0$ and $L_\infty$. We can then insist that the local sections $s_0$ and $s_\infty$ are $\bC^*$-equivariant, which in terms of the corresponding symplectic potentials translates into the conditions $\cL_E(\Theta_0)=2\Theta_0$ and $\cL_E(\Theta_\infty)=0$ appearing in Definition \ref{deftau}.

\begin{Remark}
For Joyce structures arising in DT theory the integrality property of the form $\frac{1}{2\pi {\rm i}}\cdot \Omega_1$ amounts to an extension of the usual wall-crossing formula.
In brief, the twistor fibre~$Z_1$ is covered by preferred Darboux co-ordinate charts whose transition functions are symplectic maps given by time 1 Hamiltonian flows of logarithms of products of quantum dilogarithms. The cocycle condition is the wall-crossing formula in DT theory. The generating functions for these symplectic maps are given by formulae involving the Rogers dilogarithm, and the cocycle condition for the pre-quantum line bundle $(L_1,\nabla_1)$ is then an extension of the wall-crossing formula involving these generating functions.
This picture was explained by Alexandrov, Persson and Pioline \cite{AMPP,APP}, and by Neitzke \cite{Nei}, and in the case of theories of class $S[A_1]$ plays an important role in the work of Teschner et al.\ \cite{T1,T2}. The Rogers dilogarithm identities were described in the context of cluster theory by Fock and Goncharov \cite[Section 6]{FG2}, and by Kashaev and Nakanishi \cite{KN,Nak}.
\end{Remark}

 \subsection{Choice of symplectic potentials}
\label{prims}

It was explained in Section \ref{ab} that setting
 \[
 \Theta_0=\half  i_E(\Omega_0)= \half \sum_{i,j} \omega_{ij} z_i  {\rm d}z_j,
 \]
provides a canonical and global choice for $\Theta_0$.
On the other hand, the key to defining $\Theta_1$ is to assume the existence of a distinguished Darboux co-ordinate system $(x_1,\dots,x_n)$ on the twistor fibre $Z_1$ as in Section \ref{clust}.
We can then take
 \begin{equation}\label{cano}\Theta_1 = \half \sum_{i,j} \omega_{ij} x_i  {\rm d}x_j.\end{equation}

The choice of $\Theta_\infty$ remains quite mysterious in general. One way to side-step this problem is to choose a $\bC^*$-invariant Lagrangian $R\subset Z_\infty$ as in Section \ref{cat}, since if we restrict $\tau$ to the inverse image $Y=q_\infty^{-1}(R)$ we can then drop the term $q_\infty^*(\Theta_\infty)$ from the definition of the $\tau$-function. It is not quite clear whether this procedure is any more than a convenient trick. In other words, it is not clear whether $\tau$ should be viewed as a function on $X$, depending on a choice of symplectic potential $\Theta_+$, or whether the natural objects are the Lagrangian $R\subset Z_\infty$, and a function $\tau$ defined on the corresponding submanifold $Y\subset X$.

In practice, in examples, the above choices of symplectic potentials $\Theta_0$ and $\Theta_1$
do not always give the nicest results. We discuss several possible modifications here. Of course, from the global point-of-view, these variations correspond to expressing the same $\tau$-section in terms of different local sections of the line bundles $L_0$ and $L_1$.

\subsubsection{Cotangent bundle}
\label{three}

Suppose we are given a cotangent bundle structure on $M$ as discussed in Section \ref{cot}. As explained there, it is natural to assume that the associated Liouville form $\lambda$ satisfies $\cL_E(\lambda)=2\lambda$.
We can then consider the following three choices of symplectic potential
\[
\Theta^{\rm L}_0=-\lambda, \qquad \Theta_0=\half  i_E(\Omega_0), \qquad \Theta^{\rm H}_0= i_E(\Omega_0)+\lambda.\]
The justification for the strange-looking primitive $\Theta^{\rm H}_0$ will be explained in Section \ref{spam}: it is the correct choice to produce $\tau$-functions for the Hamiltonian systems of Section \ref{hamham}.
Applying the Cartan formula gives
\[
\Theta^{\rm H}_0-\Theta_0=\Theta_0-\Theta^{\rm L}=\half  i_E(\Omega_0)+\lambda=-\half  i_E({\rm d}\lambda)+\half \cL_E(\lambda)=\half {\rm d}  i_E(\lambda).
\]
Thus the resulting $\tau$ functions differ by the addition of the global function $\half q_0^*( i_E(\lambda))$.

\subsubsection{Polarisation}
\label{polar}
Suppose the distinguished Darboux co-ordinate system $(x_1,\dots,x_{n})$ on $Z_1$ is polarised, in the sense that the coefficients $\omega_{ij}$ appearing in \eqref{cano} satisfy
$\omega_{ij}=0$ unless $|j-i|=d$, where $n=2d$.
 We can then take as symplectic potential on $Z_1$
 \[
 \Theta^{\rm P}_1=\sum_{i=1}^d \omega_{i,i+d} \cdot x_i  {\rm d}x_{i+d},
 \]
This resulting $\tau$-function will be modified by
$\half\sum_i\omega_{i,i+d} \cdot x_i x_{i+d} $.

\subsubsection[Flipping Theta\_I]{Flipping $\boldsymbol{\Theta_I}$}
\label{flip}
Given local co-ordinates on $X$ as in Section \ref{coords} there are in fact two obvious choices of symplectic potential for $2{\rm i}\Omega_I$, namely
\[
2{\rm i}\Theta_I=-\sum_{p,q} \omega_{pq} z_p  {\rm d} \theta_q, \qquad 2{\rm i}\Theta'_I=\sum_{p,q} \omega_{pq} \theta_p  {\rm d} z_q.
\]
In Section \ref{flipping}, it will be convenient to replace $2{\rm i}\Theta_I$ in the definition of the $\tau$-function with~$2{\rm i}\Theta'_I$. This
will change the $\tau$-function by $K=\sum_{p,q} \omega_{pq} \cdot z_p\theta_q$. Note that $K\colon X\to \bC$ is a globally-defined function, since it is the 1-form $ i_E (\omega)$ on $M$ considered as a function on $X=T_M$. It does not however descend to the quotient $X^\hash=T_M/(2\pi {\rm i}) T^{\bZ}_M$.

\subsection[Interpretations of the tau-function]{Interpretations of the $\boldsymbol{\tau}$-function}

By restricting to various submanifolds of $X$, the $\tau$-function can be viewed as a generating function in a confusing number of ways.

\subsubsection{Restriction to the zero-section}
\label{restr}

Let $j\colon M\hookrightarrow X=T_M$ be the inclusion of the zero-section, defined by setting all co-ordinates $\theta_i=0$. Since this is the fibre $q_\infty^{-1}(0)$ over the distinguished point $0\in Z_\infty$ of \cite[Section 5.2]{BJoy}, we have $j^*q_\infty^*(\Theta_\infty)=0$. The formula \eqref{snowy} for $\alpha_I=\Theta_I$ shows that also $j^*(\Theta_I)=0$. The defining relation of the $\tau$-function then implies that
\[
{\rm d}\log( \tau|_M) =\Theta_0 -j^*q_1^*(\Theta_1),\]
 so that $\log(\tau|_M)$ is the generating function for the symplectic map $q_1\circ j\colon M\to Z_1$ with respect to the symplectic potentials $\Theta_0$ and $\Theta_1$.

\subsubsection[Restriction to a fibre of q\_0]{Restriction to a fibre of $\boldsymbol{ q_0}$}
\label{flipping}
Let $j\colon F= T_{M,p}\hookrightarrow X$ be the inclusion of a fibre of the projection $\pi\colon X=T_M\to M$. Restriction to this locus corresponds to fixing the co-ordinates $z_i$. By \eqref{w}, the restriction
\[
\Omega_F=j^* q_\infty^*(\Omega_\infty)= \half\sum_{p,q}\omega_{pq} \cdot {\rm d}\theta_p\wedge {\rm d}\theta_q
\]
is the linear symplectic form on $T_{M,p}$ defined by the symplectic form $\omega$. It follows that $\Theta_F=j^* q_\infty^*(\Theta_\infty)$ is a symplectic potential for this form.

Let us take the flipped form $2{\rm i}\Theta'_I$ in the definition of the $\tau$-function as in Section \ref{flip}.
The forms $\Theta_0$ and $2{\rm i}\Theta'_I$ vanish when restricted to $F$, so
\[
{\rm d}\log(\tau|_F)=j^*q_\infty^*(\Theta_\infty) -j^*q_1^*(\Theta_1),
\]
and $\log(\tau|_F)$ is the generating function for the symplectic map $q_1\circ j\colon F\to Z_1$ with respect to the symplectic potentials $\Theta_F$ and $\Theta_1$.

\subsubsection[Restriction to a fibre of f]{Restriction to a fibre of $\boldsymbol{f}$}

Let us consider the setting of Section \ref{hamham}. Thus we have chosen a cotangent bundle structure~${M\subset T_B^*}$, with associated projection $\rho\colon M\to B$, and a $\bC^*$-invariant Lagrangian submanifold $R\subset Z_\infty$, and set $Y=q_\infty^{-1}(R)$. Let $j\colon F\into Y$ be the inclusion of a fibre $F=f^{-1}(b)$ of the map $f\colon Y\to B$. By Theorem \ref{inta}\,(i), the closed 2-form $2{\rm i}\Omega_I$ restricts to a symplectic form on $F$. As explained above, we can drop the term $q_\infty^*(\Theta_\infty)$ from the definition of the $\tau$-function after restricting to $Y$. Let us take the symplectic potential on $Z_0$ to be $\Theta_0^{\rm L}=-d\lambda$ as in Section~\ref{three}. Then since $\pi(F)\subset \rho^{-1}(b)$ we also have $j^* q_0^*\bigl(\Theta_0^{\rm L}\bigr)=0$. Thus
\[
{\rm d}\log(\tau|_F)=j^*(2{\rm i}\Theta_I) -j^*q_1^*(\Theta_1),\]
and $\log(\tau|_F)$ is the generating function for the symplectic map $q_1\circ j\colon F\to Z_1$ with respect to the symplectic potentials $j^*(2{\rm i}\Theta_I)$ and $\Theta_1$.

\subsubsection[Hamiltonian system tau-function]{Hamiltonian system $\boldsymbol{\tau}$-function}
\label{spam}

Consider again the setting of Section \ref{hamham}. Let us further assume that the Joyce structure is homogenous and that the Lagrangian $R\subset Z_\infty$ is preserved by the $\bC^*$-action. As before, after restriction to $Y\subset X$ we can drop the term $q_\infty^*(\Theta_\infty)$ from the definition of the $\tau$-function. Let us take the symplectic potential on $Z_0$ to be $\Theta_0^{\rm H}$ as defined in Section \ref{three}. Then we have
\begin{align*}
{\rm d}\log(\tau|_Y)&=i^*q_0^*(\lambda) + i^*q_0^*(  i_E(\Omega_0))+ i^*( i_E(2{\rm i}\Omega_I))-i^*q_1^*(\Theta_1)\\
&=p^*(\lambda)+i^*( i_E(q_1^*(\Omega_1)))-i^* q_1^*(\Theta_1).
\end{align*}
Here we used the identity \eqref{ide}, together with the assumption that the Lagrangian $R\subset Z_\infty$ is $\bC^*$-invariant, which ensures that $ i_E(\Omega_\infty)|_R=0$.
 Let $L\subset X$ be a leaf of the connection~$k_1$ on~${f\colon Y\to B}$. By construction of $k_1$, this is the intersection of $Y$ with a leaf of the connection $h_1$ on $\pi\colon X\to M$. Note that if $u$ is a horizontal vector field for the connection~$h_1$ then~${i_u  i_E (q_1^*(\Omega_1))=- i_E i_u (q_1^*(\Omega_1))=0}$. Thus
\[{\rm d}\log(\tau|_L)= p^*(\lambda)|_L.\]
Applying the definition of Section \ref{hamintro}, it follows that $\tau|_Y$ is a $\tau$-function for the strongly-integrable time-dependent Hamiltonian system of Theorem \ref{inta}.

\section{Example: uncoupled BPS structures}
\label{uncoupled}

In the paper \cite{RHDT1}, it was found that in certain special cases, solutions to DT RH problems could be encoded by a single generating function, which was denoted $\tau$. The most basic case is the one arising from the DT theory of the doubled A$_1$ quiver, where the resulting $\tau$-function is a~modified Barnes $G$-function \cite[Section 5]{RHDT1}. In the case of the DT theory of the resolved conifold~$\tau$ was shown to be a variant of the Barnes triple sine function \cite{con}, and interpreted as a non-perturbative topological string partition function. In this section, we show that these $\tau$-functions can be viewed as special cases of the more general definition given above.

\subsection[Uncoupled BPS structures and associated tau-function]{Uncoupled BPS structures and associated $\boldsymbol{\tau}$-function}
Consider as in \cite[Section 5.4]{RHDT1} a framed, miniversal family of finite, integral BPS structures over a complex manifold $M$. At each point $p\in M$ there is a BPS
structure consisting of a~fixed lattice~${\Gamma\isom \bZ^{\oplus n}}$, with a skew-symmetric form $\<-,-\>$, a central charge $Z_p\colon \Gamma\to \bC$, and a collection of BPS invariants $\Omega_p(\gamma)\in \bQ$ for $\gamma\in \Gamma$. The miniversal assumption is that the central charges~${z_i=Z(\gamma_i)}$ of a collection of basis vectors $\gamma_i\in \Gamma$ define local co-ordinates on~$M$. The finiteness assumption ensures that only finitely many $\Omega_p(\gamma)$ are nonzero for any given point~${p\in M}$, and the integrality condition is that $\Omega_p(\gamma)\in \bZ$ for a generic point $p\in M$.

Let us also assume that all the BPS structures parameterised by $M$ are uncoupled, which means that $\Omega_p(\gamma_i)\neq 0$ for $i=1,2$ implies $\<\gamma_1,\gamma_2\>=0$. This is a very special assumption, which implies \cite[Remark A.4]{RHDT1} that
 the BPS invariants $\Omega_p(\gamma)=\Omega(\gamma)$ are independent of $p\in M$. We can then take a basis $(\gamma_1,\dots, \gamma_{2d})$, where $n=2d$ as before, such that $\<\gamma_i,\gamma_j\>=0$ unless $|j-i|=d$, and such that $\Omega(\gamma)\neq 0$ implies that $\gamma\in \bigoplus_{i=1}^d \bZ \gamma_i$. We set $\eta_{ij}=2\pi {\rm i} \cdot \<\gamma_i,\gamma_j\>$ and take $\omega_{ij}$ to be the inverse matrix. Note that $\omega_{i,i+d}\cdot \eta_{i,i+d}=-1$ for all $1\leq i\leq d$.

 At each point $p\in M$ there is a DT RH problem depending on the BPS structure, and also on a twisted character $\xi\colon \Gamma\to \bC^*$. For $1\leq i\leq d$ the expressions $\exp\bigl(-\epsilon^{-1} z_i \bigr)\cdot \xi(\gamma_i)$ are solutions to this problem. We assume that $\xi(\gamma_i)=1$ for all $1\leq i \leq d$ which then implies that~${\Omega(\gamma)\neq 0 \implies \xi(\gamma)=1}$. This amounts to fixing a Lagrangian $R\subset Z_\infty$. Then by \cite[Theorem~5.3]{RHDT1}, the DT RH problem has a unique solution whose components $X_i=\exp(x_i)$ can be written in the form
 \[
 x_i=-\epsilon^{-1} z_i + y_i, \qquad y_i=\sum_{\gamma\in \Gamma} \Omega(\gamma)\cdot \<\gamma,\gamma_i\> \cdot \log \Lambda\bigg(\frac{Z(\gamma)}{2\pi {\rm i} \epsilon}\bigg),
 \]
 where $\Lambda(w)$ is the modified gamma function
 \[
\Lambda(w)=\frac{{\rm e}^{w} \cdot\Gamma(w)}{\sqrt{2\pi}\cdot w^{w-\half}}.\]

Note that for $1\leq j \leq d$ we have $y_i=0$, and
\[
2\pi {\rm i} \omega_{i,i+d} \cdot y_{i+d}= -\sum_{k_1,\dots,k_d\in \bZ } \Omega\left(\sum_{p=1}^d k_p \gamma_p \right)\cdot k_i \cdot \log \Lambda\left((2\pi {\rm i} \epsilon)^{-1} \sum_{p=1}^d k_p z_p \right).\]
This implies the relations
\begin{equation}
\label{rrr}\omega_{i,i+d} \cdot \frac{\partial y_{i+d}}{\partial z_j}= \omega_{j,j+d} \cdot \frac{\partial y_{j+d}}{\partial z_i}, \qquad \sum_{j=1}^d z_j \cdot \frac{\partial y_{i+d}}{\partial z_j} +\epsilon \cdot \frac{\partial y_{i+d}}{\partial \epsilon}=0.\end{equation}
The $\tau$-function of \cite{RHDT1} was then defined as a locally-defined function $\tau\colon M\to \bC^*$ satisfying
\begin{equation}
\label{tausnow}\frac{\partial}{\partial z_{i}} \log(\tau)= -\omega_{i,i+d}\cdot \frac{\partial y_{i+d}}{\partial \epsilon},\qquad \frac{\partial}{\partial z_{i+d}} \log(\tau)=0
\end{equation}
for $1\leq i \leq d$, and homogeneous under simultaneous rescaling of all $z_i$ and $\epsilon$.

\subsection[Comparison of tau-functions]{Comparison of $\boldsymbol{\tau}$-functions}
We can give $M$ a cotangent bundle structure in which the map $\rho\colon M\to B$ just projects to the
co-ordinates $(z_1,\dots,z_d)$. We then have
\[
\lambda=\sum_{i=1}^d \omega_{i,i+d} \cdot z_{i+d}  {\rm d}z_i, \qquad \Omega_0=\sum_{i=1}^d \omega_{i,i+d} \cdot {\rm d}z_i \wedge {\rm d}z_{i+d}.
\]
Let us take the Hamiltonian system choice $\Theta_0=\Theta_0^{\rm H}$ from Section \ref{three}, and the polarised choice~${\Theta_1=\Theta_1^{\rm P}}$ from Section \ref{polar}. Then
\[
\Theta_0= \sum_{i=1}^d \omega_{i,i+d} \cdot z_i  {\rm d}z_{i+d}, \qquad \Theta_1=\sum_{i=1}^d \omega_{i,i+d} \cdot x_i  {\rm d}x_{i+d}.
\]
 Let us also restrict to a section $M\subset X=T_M$ by fixing the constant term $\xi$. Since the variables~$\theta_i$ are then constant, the restriction of the form $\Theta_I$ is zero. The definition \eqref{tau2} of the $\tau$-function becomes
\[
{\rm d}\log(\tau|_M) =\epsilon^{-1} \cdot \sum_{i=1}^d \omega_{i,i+d} \cdot z_i {\rm d} y_{i+d}.\]

Expressing $\tau$ as a function of the co-ordinates $z_i$, we find that for $1\leq i\leq d$
\[
\frac{\partial}{\partial z_{i}} \log(\tau|_M)=\epsilon^{-1}\cdot \sum_{j=1}^d \omega_{j,j+d} \cdot z_j \frac{\partial y_{j+d}}{\partial z_i} , \qquad \frac{\partial}{\partial z_{i+d}} \log(\tau|_M)=0.
\]
Using the relations \eqref{rrr}, this gives
\[
 \frac{\partial}{\partial z_{i}} \log(\tau|_M)=\epsilon^{-1} \cdot \sum_{j=1}^d \omega_{i,i+d}\cdot z_j \frac{\partial y_{i+d}}{\partial z_j}= -\omega_{i,i+d}\cdot \frac{\partial y_{i+d}}{\partial \epsilon},
 \]
which coincides with \eqref{tausnow}.

 Thus we see that the $\tau$-functions of \cite{RHDT1} are particular examples of the $\tau$-functions introduced here. The non-perturbative topological string partition function for the resolved conifold obtained in \cite{con} also fits into this framework. Although the relevant BPS structures are not finite, they are uncoupled, and the above analysis goes through unchanged.

\section[Example: the A\_2 quiver, cubic oscillators and Painlev{\'e} I]{Example: the A$\boldsymbol{_2}$ quiver, cubic oscillators and Painlev{\'e} I}
\label{a2}

This example arises from the DT theory of the A$_2$ quiver, and is the particular example of the construction of Section \ref{geometric} corresponding to $g=0$ and $m=\{7\}$. It was studied in detail in \cite{A2} and \cite[Section 9]{BJoy} to which we refer the reader for further details. We show that with the natural choices of symplectic potentials $\Theta_0$, $\Theta_1$, $\Theta_\infty$, the resulting $\tau$-function coincides with the Painlev{\'e} I $\tau$-function considered by Lisovyy and Roussillon \cite{LR}.

\subsection{Joyce structure}
The base of the Joyce structure is
$M=\bigl\{(a,b)\in \bC^2\mid 4a^3+27b^2\neq 0\bigr\}$.
Associated to a pair $(a,b)\in M$ is a quadratic differential
 \begin{equation}
\label{coffee}Q_0(x) {\rm d}x^{\tensor 2}=\bigl(x^3+ax+b\bigr)  {\rm d}x^{\tensor 2}\end{equation}
 on $\bP^1$ which has a single pole of order 7 at $x=\infty$ and simple zeroes. The associated double cover is an affine elliptic curve
 \[
 \Sigma^0=\Sigma^0(a,b)=\bigl\{(x,y)\in \bC^2\mid y^2 =x^3+ax+b\bigr\}.
 \]

We consider the pencil of projective structures
\begin{equation}
\label{coffeee!}f''(x)=Q(x)\cdot f(x), \qquad Q(x)=\epsilon^{-2}\cdot Q_0(x) + \epsilon^{-1}\cdot Q_1(x) + Q_2(x),\end{equation}
where the terms in the potential are
\[
Q_1(x)=\frac{p}{x-q}+r, \qquad
Q_2(x)=\frac{3}{4(x-q)^2}+\frac{r}{2p(x-q)}+\frac{r^2}{4p^2},
\]
and we impose the relation
$
p^2=q^3+aq+b$,
which ensures that \eqref{coffee} has an apparent singularity at $x=q$ for all $\epsilon\in \bC^*$. Thus the pencil of projective structures \eqref{coffeee!} is parameterised by a point $(a,b)\in M$, a point $(q,p)\in \Sigma^0(a,b)$, and a point $r\in \bC$.

Locally, we can take alternative co-ordinates $(z_1,z_2,\theta_1,\theta_2)$ defined by the relations
\[
z_i=\int_{\gamma_i} y  {\rm d}x, \qquad \theta_i=-\int_{\gamma_i} \bigg(\frac{p}{x-q} + r\bigg) \frac{{\rm d}x}{2y},
\]
where $(\gamma_1,\gamma_2)\subset H_1(\Sigma,\bZ)$ is a basis of cycles with intersection $\gamma_1\cdot \gamma_2=1$.

The Joyce structure on $X=T_M$ is obtained by taking the isomonodromy connection $h_\epsilon$ for the above family of projective structures. Explicitly, it is given by
\begin{align*}
h_\epsilon\bigg(\frac{\partial}{\partial a}\bigg)&=-\frac{2p}{\epsilon} \frac{\partial}{\partial q}
-\frac{q}{\epsilon} \frac{\partial}{\partial r} +\bigg(\frac{\partial}{\partial a}-\frac{r}{p}\frac{\partial}{\partial q}
-\frac{r^2\bigl(3q^2+a\bigr)-qpr}{2p^3} \frac{\partial}{\partial r}\bigg),\\
h_\epsilon\bigg(\frac{\partial}{\partial b}\bigg)&=-\frac{1}{\epsilon}\frac{\partial}{\partial r}+\bigg(\frac{\partial}{\partial b}
 +\frac{r}{2p^2} \frac{\partial}{\partial r}\bigg).
 \end{align*}
 To obtain the Pleba{\'n}ski function $W$, one rewrites these flows in the co-ordinates $(z_i,\theta_j)$. The explicit formula can be found in \cite[Section 9]{BJoy}.

\subsection{Euler vector fields and symplectic forms}

 The Euler vector field on $M$ is
 \[
 Z=z_1\frac{\partial}{\partial z_1}+z_2\frac{\partial}{\partial z_2}= \frac{4a}{5} \frac{\partial}{\partial a} + \frac{6b}{5} \frac{\partial}{\partial b},
 \]
 and the lift to $X$ is
 \[
 E=z_1\frac{\partial}{\partial z_1}+z_2\frac{\partial}{\partial z_2}=\frac{4a}{5} \frac{\partial}{\partial a} + \frac{6b}{5} \frac{\partial}{\partial b}+ \frac{2q}{5} \frac{\partial}{\partial q}+ \frac{r}{5} \frac{\partial}{\partial r}.
 \]
The following facts were obtained in \cite[Section 9]{BJoy}.
There are identities
\begin{gather}
 \Omega_0=-\frac{1}{2\pi {\rm i}} {\rm d}z_1\wedge {\rm d}z_2 = {\rm d}a\wedge {\rm d}b,\nonumber \\
2{\rm i}\Omega_I=\frac{1}{2\pi {\rm i}} ({\rm d}\theta_1\wedge {\rm d}z_2 - {\rm d}\theta_2\wedge {\rm d}z_1)={\rm d}q\wedge {\rm d}p+{\rm d}a\wedge {\rm d}r, \label{flour}
\end{gather}
which then immediately give
\begin{gather*}
  i_E(\Omega_0)=-\frac{6b}{5} {\rm d}a+\frac{4a}{5} {\rm d}b, \qquad  i_E(2{\rm i}\Omega_I)=\frac{2q}{5} {\rm d}p-\frac{3p}{5} {\rm d}q +\frac{4a}{5} {\rm d}r-\frac{r}{5} {\rm d}a.\end{gather*}

Moreover, the functions
\[
\phi_1=q+\frac{ar}{p}, \qquad \phi_2=\frac{r}{2p}, \]
descend to local co-ordinates on the twistor fibre $Z_\infty$, and satisfy the relation
$\Omega_\infty={\rm d}\phi_1\wedge {\rm d}\phi_2$,
which can be used to give an explicit expression for $\Omega_\infty$.

\subsection[Joyce structure tau-function]{Joyce structure $\boldsymbol{\tau}$-function}
Let us now consider the additional choices needed to define the Joyce structure $\tau$-function. Firstly, there is a natural cotangent bundle structure on $M$ for which $\rho\colon M\to B$ is the projection to the co-ordinate $a$, which is the Painlev{\'e} time. The associated Liouville form is $\lambda = b {\rm d}a$.

We shall
restrict to the Lagrangian $R\subset Z_\infty$ defined by the equation $\phi_2=0$. The inverse image $Y=q_\infty^{-1}(L)\subset X$ is then the 3-dimensional locus $r=0$.
 We can take $(a,q,p)$ as co-ordinates on the space $Y$. The map $f\colon Y\to B$ of Section \ref{hamham} is then the projection $(a,q,p)\mapsto a$. By~\eqref{flour}, the form $2{\rm i}\Omega_I$ induces the relative symplectic form ${\rm d}q\wedge {\rm d}p$. The horizontal leaves of the connection $k_\infty$ are obtained by varying $a$ while keeping $(q,p)$ fixed. The Hamiltonian form is $\varpi=b {\rm d}a$.

The twistor fibre $Z_1$ is the space of framed local systems, and is covered by birational Fock--Goncharov co-ordinate charts $(\exp(x_1),\exp(x_2))$. We take the polarised choice for the symplectic potential $\Theta_1$, and the Hamiltonian system choice for $\Theta_0$. Thus
\[
\Theta_0= i_E(\Omega_0)+\lambda, \qquad \Theta_1=-(2\pi {\rm i})^{-1} x_1 {\rm d}x_2.
\]
Omitting the pullbacks $q^*$ from the notation, the definition of the $\tau$-function reads
\[
 {\rm d}\log(\tau) =\epsilon^{-2} \cdot \Theta_0 +\epsilon^{-1} \cdot 2{\rm i}\Theta_I +\Theta_\infty-\Theta_1.\]
With the above choices this gives
\begin{equation}
\label{my}
{\rm d}\log(\tau|_Y) =\epsilon^{-2}\left(-\frac{6b}{5} {\rm d}a +\frac{4a}{5} {\rm d}b+b{\rm d}a\right)+\epsilon^{-1} \left(\frac{2q}{5} {\rm d}p- \frac{3p}{5} {\rm d}q\right)+\frac{1}{2\pi {\rm i}} x_1 {\rm d}x_2.\end{equation}

We now compare with the Painlev{\'e} $\tau$-function computed in \cite{LR} and use notation as there. Consider the form $\omega$ defined in \cite[equation~(3.4)]{LR}. It involves local co-ordinates $t$, $m_a$, $m_b$ with $m_a$, $m_b$ constant under Painlev{\'e} flow. Let us re-express $\omega$ in terms of the local co-ordinates $t$, $q$, $p$. Using the relations $q_t=p$, $p_t=6q^2+t$ and $H_t=-q$ appearing on \cite[page 1]{LR}, we get
\begin{align*}
\frac{\omega}{2}&=H {\rm d}t +\frac{1}{5}\bigl(4t {\rm d}H + 3q_t {\rm d}q -2q {\rm d}q_t-\bigl(4t H_t+3q_t^2-2q q_{tt}\bigr){\rm d}t\bigr)\\&=-\frac{H}{5} {\rm d}t + \frac{4t}{5} {\rm d}H + \frac{3p}{5} {\rm d}q-\frac{2q}{5} {\rm d}p.
\end{align*}
It is shown in \cite{LR} that $\Omega={\rm d}\omega = 4\pi {\rm i} {\rm d}\nu_1 \wedge {\rm d}\nu_2$ and then the $\tau$-function $\tau_{\LR}$ is defined by
\begin{equation}
\label{lis} {\rm d}\log(\tau_{\LR})=\frac{1}{2}\omega- 2\pi {\rm i} \nu_1 {\rm d}\nu_2.\end{equation}

To compare with the $\tau$-function $\tau_{\TB}$ of the previous section, set $p_{\LR}=-2p_{\TB}$, $q_{\LR}=q_{\TB}$, $t=2a$, $H=2b$. Here the subscript $\TB$ means as in this paper, whereas $\LR$ means as appearing in \cite{LR}. Also we should set $r=0$ and $\epsilon=\half$. The projective structure \eqref{coffeee!} then becomes gauge equivalent to the system (2.1a) of \cite{LR}. To match the monodromy data, we set $x_1 = 2\pi {\rm i} \nu_1$ and $x_2=-2\pi {\rm i} \nu_2$. Then in terms of the notation of this paper, the definition \eqref{lis} becomes
\[
{\rm d}\log \tau_{\LR} = -\frac{4b}{5} {\rm d}a + \frac{16a}{5} {\rm d}b - \frac{6p}{5} {\rm d}q+\frac{4q}{5} {\rm d}p + \frac{1}{2\pi {\rm i}} x_1 {\rm d}x_2,
\]
which coincides with \eqref{my}. It follows that the two $\tau$-functions, which are both well defined up to multiplication by a nonzero constant, coincide.

\appendix
 \section{Pre-quantum line bundles in the holomorphic setting}
\label{app}

The following result on pre-quantum line bundles in the holomorphic setting is standard and well known, but is so relevant to the definition of the $\tau$-function that it seems worth briefly recalling the proof. As in the body of the paper, all line bundles, connections, symplectic forms etc., will be holomorphic, but to make clear the distinction from the more familiar geometric quantization story we will re-emphasize this at several places.

\begin{Theorem}
Let $M$ be a complex manifold equipped with a holomorphic symplectic form $\Omega$. Assume that the de Rham cohomology class $[\Omega]$ satisfies the integrality condition \[
\frac{1}{2\pi {\rm i}}\cdot [\Omega]\in H^2(M,\bZ)\subset H^2(M,\bC).
\]
Then there is a holomorphic line bundle with holomorphic connection whose curvature is $\Omega$.
\end{Theorem}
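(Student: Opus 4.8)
The plan is to run the standard prequantization argument, taking care that every object it produces lands in the holomorphic category. First I would choose an open cover $\{U_\alpha\}$ of $M$ by polydiscs, so that all nonempty finite intersections are contractible Stein domains. On such sets the \emph{holomorphic} Poincar\'e lemma applies, so from ${\rm d}\Omega=0$ I obtain holomorphic $1$-forms $\theta_\alpha$ on $U_\alpha$ with ${\rm d}\theta_\alpha=\Omega$. On each overlap $U_{\alpha\beta}$ the difference $\theta_\alpha-\theta_\beta$ is a closed holomorphic $1$-form, hence equals ${\rm d}f_{\alpha\beta}$ for a holomorphic function $f_{\alpha\beta}$, and on triple overlaps the combination $f_{\alpha\beta}+f_{\beta\gamma}+f_{\gamma\alpha}$ is locally constant, defining a \v{C}ech $2$-cocycle $c_{\alpha\beta\gamma}\in\bC$.

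Second, I would identify $c$ with the de Rham class of $\Omega$. The holomorphic Poincar\'e lemma exhibits the constant sheaf $\bC$ as quasi-isomorphic to the holomorphic de Rham complex, and the usual \v{C}ech--de Rham double complex then shows $[c]=[\Omega]$ in $H^2(M,\bC)$. The integrality hypothesis says $\frac{1}{2\pi{\rm i}}[\Omega]$ lies in the image of $H^2(M,\bZ)$, so after passing to a refinement of the cover there is an integral cocycle $n_{\alpha\beta\gamma}$ and a constant $1$-cochain $\lambda_{\alpha\beta}$ with $\frac{1}{2\pi{\rm i}}c_{\alpha\beta\gamma}=n_{\alpha\beta\gamma}+(\delta\lambda)_{\alpha\beta\gamma}$. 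Replacing $f_{\alpha\beta}$ by $f_{\alpha\beta}-2\pi{\rm i}\,\lambda_{\alpha\beta}$ leaves ${\rm d}f_{\alpha\beta}=\theta_\alpha-\theta_\beta$ unchanged, since the $\lambda_{\alpha\beta}$ are constant, but now normalizes the periods to $f_{\alpha\beta}+f_{\beta\gamma}+f_{\gamma\alpha}=2\pi{\rm i}\,n_{\alpha\beta\gamma}$.

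Third, set $g_{\alpha\beta}=\exp(f_{\alpha\beta})$. These are holomorphic and nowhere vanishing, and the normalization just achieved gives $g_{\alpha\beta}g_{\beta\gamma}g_{\gamma\alpha}=\exp(2\pi{\rm i}\,n_{\alpha\beta\gamma})=1$, so $\{g_{\alpha\beta}\}$ is a genuine holomorphic line-bundle cocycle and defines $L$; this is exactly the connecting map in the exponential sequence $0\to\bZ\to\O_M\to\O_M^*\to 0$ realizing the prescribed integral class as $c_1(L)$. Finally I would define a holomorphic connection by $\nabla={\rm d}+\theta_\alpha$ on $U_\alpha$. Because $\theta_\alpha-\theta_\beta={\rm d}\log g_{\alpha\beta}$, these local expressions are compatible with the transition functions and glue to a global holomorphic connection on $L$, whose curvature is ${\rm d}\theta_\alpha=\Omega$ by construction.

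The only real content lies in the passage from $c$ to the integrally normalized $f_{\alpha\beta}$: this is precisely where the hypothesis $\frac{1}{2\pi{\rm i}}[\Omega]\in H^2(M,\bZ)$ enters, and it is what guarantees that exponentiating the local potentials yields an honest cocycle rather than merely a projective one. Everything else is formal; the single point needing emphasis, as the statement itself flags, is that the Poincar\'e lemma is invoked in its holomorphic form, so that $\theta_\alpha$, $f_{\alpha\beta}$, $g_{\alpha\beta}$ and the resulting connection are all holomorphic throughout.
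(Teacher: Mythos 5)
Your proposal is correct and takes essentially the same approach as the paper: after an abstract reformulation via boundary maps in a diagram of sheaves, the paper's proof carries out exactly your \v{C}ech argument --- local potentials $\Theta_i$ with ${\rm d}\Theta_i=\Omega|_{U_i}$, functions $\phi_{ij}$ with ${\rm d}\log\phi_{ij}=\Theta_i-\Theta_j$, a constant adjustment (your $\lambda_{\alpha\beta}$, the paper's $r_{ij}$) supplied by the integrality hypothesis to enforce the cocycle condition, and gluing of the connections ${\rm d}+\Theta_i$. The only cosmetic difference is that you work additively with primitives $f_{\alpha\beta}$ and exponentiate at the end, whereas the paper works multiplicatively with the $\phi_{ij}$ from the start (and you are slightly more careful in requiring the cover's intersections to be Stein as well as contractible, which is what the holomorphic Poincar\'e lemma in degree two actually needs).
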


\begin{proof}
Consider the following diagram of sheaves of abelian groups on $M$, in which ${\rm d}\O$ is the sheaf of closed holomorphic 1-forms, ${\rm d}$ is the de Rham differential, and the unlabelled arrows are the obvious inclusions,
\begin{equation*}
\begin{gathered}
\xymatrix@C=1.8em{ \bZ \ar[d] \ar[rr]^{\cdot 2\pi {\rm i}} && \bC \ar[d]\ar[rr]^{\exp} && \bC^*\ar[d]\\
\bZ \ar[rr]^{\cdot 2\pi {\rm i}} && \O \ar[rr]^{\exp} \ar[d]_{d}&& \O^*\ar[d]_{{\rm d}\log} \\
&& {\rm d}\O \ar[rr] && {\rm d}\O.
}\end{gathered}
 \end{equation*}

The sheaf of 1-forms satisfying ${\rm d}\Theta=\Omega$ is a torsor for the sheaf ${\rm d}\O$, and hence defines an element $\eta\in H^1(M,{\rm d}\O)$. The image of $\eta$ via the boundary map in the central column is the class~${[\Omega]\in H^2(M,\bC)}$. By the integrality assumption, the image of $\eta$ via the boundary map in the right-hand column is $1\in H^2(M,\bC^*)$. Thus, by the long exact sequence in cohomology for the right-hand column there exist elements $\phi\in H^1(M,\O^*)$ satisfying ${\rm d}\log(\phi)=\eta$. Such a~class~$\phi$ defines a line bundle $L$ on $M$, and one can then see that $L$ has a holomorphic connection with curvature $\Omega$.

Translating the above discussion into {\v C}ech cohomology gives the following. Take a covering of $X$ by open subsets $U_i$ such that all intersections $U_{i_1,\dots, i_n}=U_{i_1}\cap \cdots\cap U_{i_n}$ are contractible. Choose 1-forms $\Theta_i$ on $U_i$ satisfying ${\rm d}\Theta_i=\Omega|_{U_i}
$, and set $\Theta_{ij}=\Theta_i|_{U_{ij}}-\Theta_j|_{U_{ij}}$. Then ${\rm d}\Theta_{ij}=0$ and the collection $\{\Theta_{ij}\}$ defines a \v{C}ech 1-cocycle for the sheaf ${\rm d}\O$. On $U_{ij}$ we can now write
 \begin{equation}
\label{dull}{\rm d}\log \phi_{ij}=\Theta_{ij}=\Theta_i|_{U_{ij}}-\Theta_j|_{U_{ij}}
\end{equation}
 for functions $\phi_{ij}\colon U_{ij}\to \bC^*$. The integrality assumption implies that, after replacing $\phi_{ij}$ by~${r_{ij}\cdot \phi_{ij}}$ for constants $r_{ij}\in \bC^*$, we can assume that
 \[
\phi_{ij}|_{U_{ijk}}\cdot \phi_{jk}|_{U_{ijk}}\cdot \phi_{ki}|_{U_{ijk}}=1.\]
We then define the line bundle $L$ by gluing the trivial line bundles $L_i$ over $U_i$ using multiplication by $\phi_{ij}$. The relations \eqref{dull} show that the connections $\nabla_i={\rm d}+\Theta_i$ on $L_i$ glue to a connection~$\nabla$ on $L$. Since $\nabla_i$ has curvature ${\rm d}\Theta_i=\Omega|_{U_i}$, the glued connection $\nabla$ has curvature $\Omega$.
\end{proof}

\begin{Remarks}\quad
\begin{itemize}\itemsep=0pt
\item[(i)]
The relation \eqref{dull} can be phrased as the statement that the gluing map $\phi_{ij}$ for the line bundle $L$ is the exponential generating function relating the symplectic potentials $\Theta_i|_{U_{ij}}$ and $\Theta_j|_{U_{ij}}$ on $U_{ij}$.
\item[(ii)] Suppose $U\subset M$ is a contractible open subset. Then sections $s\in H^0(U,L)$ up to scale are in bijection with symplectic potentials $\Theta$ on $U$. Given $s$ we can write ${\nabla(s)=\Theta\cdot s}$ with~${{\rm d}\Theta=\Omega}$. Conversely, given another symplectic potential $\Theta'$ on $U$ we can write~$\Theta'-\Theta={\rm d}\log(f)$ and hence define a section $s'=f\cdot s$ satisfying $\nabla(s')=\Theta'\cdot s'$.
\end{itemize}
\end{Remarks}

\subsection*{Acknowledgements}
The ideas presented here have evolved from discussions with many people over a long period of time. I would particularly like to thank Sergei Alexandrov, Andy Neitzke, Boris Pioline and J{\"o}rg Teschner for sharing their insights, and for patiently explaining many basic things to me. I~am also very grateful for discussions and correspondence with Murad Alim, Fabrizio Del Monte, Maciej Dunajski, Lotte Hollands, Kohei Iwaki, Omar Kidwai, Dima Korotkin, Oleg Lisovyy, Lionel Mason, Ian Strachan and Menelaos Zikidis. Finally, I~thank the anonymous referees for their careful reading and useful suggestions for improvements.

\pdfbookmark[1]{References}{ref}
\LastPageEnding

\end{document}